\documentclass[12pt, a4paper,reqno,oneside]{amschanged}

\usepackage{hyperref}
\usepackage{amsmath}
\usepackage{amssymb,stmaryrd}
\usepackage{amsthm}
\usepackage{psfrag}
\usepackage[utf8]{inputenc}
\usepackage{graphicx}
\usepackage{hyperref}
\usepackage{mathtools}
\usepackage{calrsfs}
\usepackage{tikz}
\usepackage{nicefrac}
\usepackage{graphicx}
\usepackage{caption}
\usepackage{subcaption}

\usetikzlibrary{decorations.pathmorphing, patterns, calc,snakes}
\definecolor{dblue}{rgb}{0.09,0.32,0.44} 
\hypersetup{
    colorlinks,%
    citecolor=dblue,%
    filecolor=black,%
    linkcolor=dblue,%
    urlcolor=blue,
    pdfborder={0 0 0},
    pdfborderstyle={}
}

\usepackage{fullpage}
\pagestyle{plain} 

\newtheorem{theorem}{Theorem}

\newtheorem{proposition}[theorem]{Proposition}
\newtheorem{lemma}[theorem]{Lemma}


\theoremstyle{remark}

\newtheorem*{remark*}{Remark}

\newcommand{\lr}[1][]{\xleftrightarrow{\: #1 \:\:}}

\newcommand\fullsizenesearrows{\ensuremath{\setlength{\unitlength}{1cm}
\begin{picture}(0.3,0.34)
\put(0,0.17){{$\scriptscriptstyle\nearrow$}}
\put(0,0){{$\scriptscriptstyle\searrow$}}
\end{picture}}}

\newcommand\nesearrows{{\mathchoice
  {\fullsizenesearrows}
  {\fullsizenesearrows}
  {\scalebox{0.65}{\fullsizenesearrows}}
  {\scalebox{0.4}{\fullsizenesearrows}}
}}

\overfullrule 1mm

\usepackage{titlesec}
\titleformat{\section}{\Large\bfseries}{\thesection}{1em}{}
\titleformat{\subsection}{\bfseries}{\thesubsection}{1em}{}

\usepackage{courier}

\usepackage{array}
\newcolumntype{e}{>{\displaystyle}r @{\,} >{\displaystyle}c @{\,} >{\displaystyle}l}

\newcounter{constant}

\setcounter{constant}{-1}
\newcommand{\ep}{\varepsilon}
\begin{document}


\title{\LARGE \usefont{T1}{tnr}{b}{n} \ \rlap{\lowercase{\begin{picture}(0,0)\put(-20,-20){\includegraphics[scale=0.32]{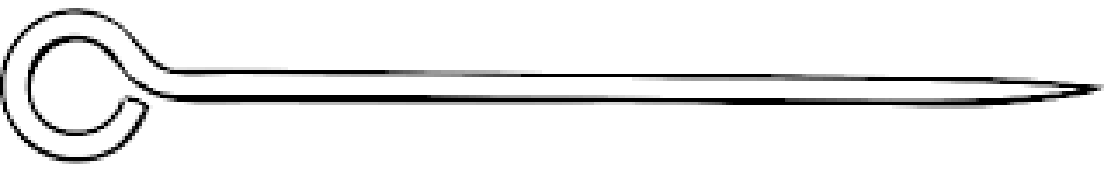}}\end{picture}}}B\lowercase{rochette  percolation}}


\author{\color{black} \normalsize \itshape H. D\lowercase{uminil}-C\lowercase{opin} $^1$ \color{white} \tiny and}
\address{$^1$ Universit\'e de Gen\`eve, Section de Math\'ematiques \newline \hspace*{6mm} 
Gen\`eve 1211, Switzerland
{\itshape \texttt{hugo.duminil@unige.ch}}.}

\author{\color {black} \normalsize \itshape M. R. H\lowercase{il\'ario} $^1$ $^2$ \color{white} \tiny and}
\address{$^2$ Universidade Federal de Minas Gerais, Departamento de Matem\'atica, 
\newline \hspace*{6mm} Belo Horizonte 31270-901, Brazil 
{\itshape \texttt{mhilario@mat.ufmg.br}}.}

\author{\color{black} \normalsize \itshape G. K\lowercase{ozma} $^3$ \color{white} \tiny and}
\address{$^3$ The Weizmann Institute of Science 
\newline \hspace*{6mm} Rehovot 7610001, Israel
{\itshape \texttt{gady.kozma@weizmann.ac.il}}.}

\author{\color{black} \normalsize \itshape V. S\lowercase{idoravicius} $^4$  $^5$ \color{white} \tiny}
\address{$^4$    
Courant Institue of Mathematical Sciences, NYU,
\newline \hspace*{6mm} 251 Mercer Street, New York, NY 10012, USA, and}
\medskip
\address{$\; \, $ NYU-ECNU Institute of Mathematical Sciences at NYU Shanghai,
\newline \hspace*{6mm} 3663 Zhongshan Road North, Shanghai, 200062, China.
{\itshape \texttt{vs1138@nyu.edu}}.}

\address{$^5$ Cemaden, 
Estrada Dr. Altino Bondensan, 500 Coqueiro, 
\newline \hspace*{6mm} São José dos Campos - SP,  12247-016, Brasil.
}

\date{\today}

\begin{abstract}
We study bond percolation on the square lattice with one-dimensional inhomogeneities. Inhomogeneities are introduced 
in the following way: 
A vertical column on the square lattice is the set of vertical edges that project to the same vertex on $\mathbb{Z}$.
Select vertical columns at random independently with a given positive probability.
Keep (respectively remove) vertical edges in the selected columns, with probability $p$, (respectively $1-p$).
All horizontal edges and vertical edges lying in unselected columns are kept (respectively removed) with probability $q$,  (respectively $1-q$).
We show that, if $p > p_c(\mathbb{Z}^2)$ (the critical point for homogeneous Bernoulli bond percolation) then $q$ can be taken strictly smaller then $p_c(\mathbb{Z}^2)$ in such a way that the probability that the origin percolates is still positive.
 
\end{abstract}

\maketitle

\section{Introduction}\label{sec:intr}

\subsection{Definition of the model and statement of the result}

Consider the square lattice $\mathbb{Z}^2=(V(\mathbb{Z}^2),E(\mathbb{Z}^2))$ defined by
\begin{align*}V(\mathbb Z^2)&:=\big\{x=(x_1,x_2)\in\mathbb R^2:x_1,x_2\in\mathbb Z\big\},\\
E(\mathbb Z^2)&:=\big\{\{x,y\}\subset V(\mathbb{Z}^2):|x_1-y_1|+|x_2-y_2|=1\big\}.\end{align*}
A {\em percolation configuration} is an element of $\{0,1\}^{E(\mathbb Z^2)}$ denoted generically by $\omega=(\omega(e):e\in E(\mathbb Z^2))$. Note that $\omega$ can be seen as a subgraph of $\mathbb Z^2$ by setting 
$V(\omega):=V(\mathbb Z^2)$ and $E(\omega):=\{e\in E(\mathbb Z^2):\omega(e)=1\}$.
The study of the connectivity properties of this subgraph obtained when $\omega$ is sampled at random is the main goal of percolation theory.

In the Bernoulli bond percolation model on $\mathbb{Z}^2$, the $\omega(e)$'s are independent Bernoulli random variables with mean $p_e$.
The model is said to be homogeneous when, for every $e$, $p_e = p$ for some $p \in [0,1]$. 
Otherwise it is said to be inhomogeneous.
One way of introducing inhomogeneities is by modifying the `weight' $p_e$ of edges $e$ lying along a fixed set of vertical columns.

There are also several important dependent percolation models in which, in contrast to Bernoulli percolation, the state of the edges are not independent in contrast to Bernoulli percolation.
In this paper we study one such model where the state of vertical edges lying in the same vertical column are correlated as we describe now.

For $\Lambda\subset\mathbb Z$, set 
$$E_{\rm ver}(\Lambda\times\mathbb Z):=\big\{\{(x_1,x_2),(x_1,x_2+1)\}:x_1\in\Lambda, x_2\in \mathbb Z\big\}.$$ 
For $p,q\in[0,1]$, let $\mathbb P^\Lambda_{p,q}$ be the law on $\{0,1\}^{E(\mathbb Z^2)}$ under which the $\omega_e$ ($e\in E(\mathbb Z^2)$) are independent Bernoulli random variables with mean $p_e$ given by 
$$p_e=\begin{cases}p&\text{ if }e\in E_{\rm ver}(\Lambda\times\mathbb Z)\\ q&\text{ if }e\notin E_{\rm ver}(\Lambda\times\mathbb Z)\end{cases}.$$
Notice that $\mathbb{P}^{\Lambda}_{p,p}$ is the {\em Bernoulli bond percolation} measure with edge-weight $p$, which will be denoted by $\mathbb{P}_p$.
However, when $p \neq q$ and $\Lambda$ is non-empty, $\mathbb{P}^{\Lambda}_{p,q}$ is a inhomogeneous Bernoulli percolation due to the presence of one-dimensional columnar inhomogeneities  along the vertical columns that project to $\Lambda$.
Also, in this case, the model is no longer translation invariant.

We now wish to take $\Lambda$ at random. 
For each $\rho \in [0,1]$, define $\nu_{\rho}$ to be the probability measure on subsets of $\mathbb{Z}$ under which $\{i\in\Lambda\}$ are independent events having probability $\rho$.
We are now in a position to state our main result.
Denote by $\{0 \longleftrightarrow \infty\}$ the event that the origin is connected to infinity (see Section \ref{s:notation} for a precise definition). 
Let $p_c$ be such that $\mathbb P_p(0\leftrightarrow \infty)$ equals 0 if $p<p_c$, and is strictly positive if $p>p_c$ (Kesten proved in \cite{Kesten80} that $p_c=1/2$).
\begin{theorem}
\label{t:main}
For every $\varepsilon \in (0,1/2]$ and $\rho >0$, there exists $\delta >0$ such that for $\nu_\rho$-almost every $\Lambda$,
\[\mathbb{P}_{p_c+\varepsilon, p_c-\delta}^{\Lambda}(0\longleftrightarrow\infty) > 0.\]
\end{theorem}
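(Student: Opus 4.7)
I would prove Theorem~\ref{t:main} by a block renormalization argument in which selected columns function as ``vertical highways'' that compensate for the slightly subcritical horizontal parameter $q := p_c - \delta$. Fix $\varepsilon \in (0, 1/2]$ and $\rho > 0$, write $p := p_c + \varepsilon$, and consider scale-$N$ blocks. A block would be declared \emph{good} if its density of selected columns is close to $\rho$ and an explicit crossing of the block exists in the brochette configuration. Once the good-block probability is close enough to $1$, the good-block process on the coarse lattice (which depends on the configuration only in a bounded neighborhood of the block, hence is finite-range dependent) stochastically dominates a supercritical Bernoulli site percolation via the Liggett--Schonmann--Stacey theorem, yielding percolation of the origin in the brochette model by standard gluing.

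The structural input that makes the construction of a block-crossing feasible is that $p > 1/2$: in each selected column the open vertical edges form an i.i.d.\ supercritical $1$D Bernoulli sequence, which within height $N$ contains of order $N$ open intervals together with a positive density of open intervals of any prescribed fixed length. I would assemble a left-right crossing of the block as a ``staircase'' alternating short vertical traversals inside open intervals of selected columns with short horizontal hops of bounded length through intervening non-selected territory at parameter $q$. Consecutive selected columns lie at typical horizontal distance $O(1/\rho)$, so a typical hop uses only $O(1/\rho)$ horizontal edges at parameter $q$, and thereby occurs with positive constant probability $q^{O(1/\rho)}$.

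The principal obstacle has two intertwined aspects. First, there is a \emph{height-matching} problem: when a horizontal hop lands in the next selected column at a given height, that height must lie inside an open vertical interval long enough to carry the staircase to the next hop. Second, there is a \emph{counting} problem: a crossing requires of order $\rho N$ successful hops, so if each hop succeeded only with a constant probability one would lose an exponential factor in $N$ that cannot be absorbed by the polynomial number of candidate paths. I plan to handle both issues simultaneously by granting each step $\Theta(\tau)$ candidate landing heights via vertical detours of length up to $\tau$, and then invoking a second-moment (or sprinkling) argument across these candidates to push the per-step success probability to $1 - O(1/N)$, so that the total crossing succeeds with probability bounded away from $0$. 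The parameter $\delta$ would be chosen, in terms of $\varepsilon$, $\rho$, and $N$, so that this quantitative matching closes. Once the good-block probability tends to $1$ as $N \to \infty$, the renormalization closes, and the $\nu_\rho$-almost-sure form of the conclusion is inherited from the ergodicity of $\nu_\rho$ under horizontal translations together with the $0$--$1$ law for the existence of an infinite cluster.
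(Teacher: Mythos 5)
Your proposal takes a genuinely different route but contains a gap that I do not see how to close. The staircase heuristic rests on the claim that consecutive selected columns lie at horizontal distance $O(1/\rho)$ so that each hop costs a constant $q^{O(1/\rho)}$. What controls a block crossing, however, is the \emph{worst} gap encountered across a block of width $N$, not the typical one: under $\nu_\rho$ the gaps between consecutive elements of $\Lambda$ are i.i.d.\ geometric with mean $1/\rho$, so the maximum gap inside a window of length $N$ is $\Theta(\log N/\rho)$. At $q<p_c$, an open horizontal run of this length has probability $q^{\Theta(\log N/\rho)}=N^{-c}$ with $c=\Theta(\rho^{-1}\log(1/q))$, and since $q<\tfrac12$ one has $c>1$ once $\rho$ is small. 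Even after sprinkling over $\Theta(N)$ candidate landing heights, the expected number of successful traversals of the worst gap is $N^{1-c}\to0$; the exponential-in-$N$ deficit you identify is real, but the vertical-detour device does not repair it. (One could replace straight horizontal runs by arbitrary open paths across the gap region and appeal to the divergence of the correlation length as $q\uparrow p_c$, but that is exactly near-critical percolation, which your sketch does not invoke.) Relatedly, you ask that the good-block probability tend to $1$, yet $\Lambda$ almost surely contains, with positive density along $\mathbb Z$, gaps far exceeding $\Theta(\log N)$; blocks straddling such a gap do not become good as $N\to\infty$, and since these bad blocks are organized into entire vertical columns you cannot dispose of them by a Peierls estimate or by making the good-block probability large.

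The paper sidesteps both difficulties. Rather than building a crossing from scratch at $q<p_c$, it starts from the critical annulus-circuit event $\mathcal A_n$, which already has probability $\Theta(1)$ at $q=p_c$ by Russo--Seymour--Welsh, and quantifies how much the enhancement along $\Lambda$ raises that probability: an Aizenman--Grimmett-type pivotality comparison (Proposition~\ref{l:Pp_c}) gives $\mathbb P_{p_c+\varepsilon,p_c}^{\Lambda}(\mathcal A_n)\ge\mathbb P_{p_c+k^{-c_1}}(\mathcal A_n)$ for $k$-syndetic $\Lambda$, and near-critical percolation (correlation length and the four-arm exponent, Proposition~\ref{l:limpp_c}) shows $\mathbb P_{p_c+(\log n)^{-c_3}}(\mathcal A_n)\to1$, which is exactly what is needed at the syndetic scale $k=\Theta(\log n)$. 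The shift of $q$ strictly below $p_c$ is obtained only at the very end by continuity in $(p,q)$. Finally, instead of demanding that every renormalized site be good, the paper dominates the $1$-dependent renormalized field by the Kesten--Sidoravicius--Vares oriented percolation model in a random environment, which is designed precisely to tolerate rare bad columns, with an FKG argument (Lemma~\ref{l:domW}) controlling conditional probabilities inside bad columns. That last comparison is the crux of upgrading the result from syndetic $\Lambda$ to $\nu_\rho$-almost every $\Lambda$, and it has no analogue in your sketch.
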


Before we continue, let us just mention a few more words about the nature of this model.
Although, in general, the `quenched' law $\mathbb{P}^{\Lambda}_{p,q}$ is inhomogeneous, the annealed law (i.e.\ the law obtained by averaging $\mathbb{P}^{\Lambda}_{p,q}(\cdot)$ over the realisations of $\Lambda$) is homogeneous: Each edge has the same weight.
Furthermore the annealed law is translation invariant, since $\nu_{\rho}$ also is.
However, it is a dependent percolation since the correlation between the state of edges lying on a same vertical column is a positive constant that does not even decay with their distance.

In the remainder of this section we present some of our motivations for addressing this problem, mention some related works, and highlight the ideas and techniques to  be used in the proof of the above theorem.
\medskip

\subsection{Motivation and related models}

This work is motivated by the following general question: how do $d$-dimensional inhomogeneities in $(d+1)$-dimensional lattice models shift the critical point or change the order of the phase transition?
This question was raised before for a number of models and settings as we describe below.

In the context of percolation, a classical argument due to Aizenman and Grimmett \cite{Aizenman_Grimmett} guarantees the validity of Theorem \ref{t:main} in the case that $\Lambda$ only contains bounded gaps (i.e.\ when there exists a $k \in (0, \infty)$ such that $\Lambda$ intersects all sets of the type $[l,r] \cap \mathbb{Z}$ with $r-l = k)$.
Note that, for any $\rho \in (0,1)$, the set of $\Lambda$'s that exhibit such a regularity condition, has zero measure under $\nu_{\rho}$.
We will use the framework developed in \cite{Aizenman_Grimmett} as one of the elements of our proof.

In \cite{Zhang}, Zhang addresses the case when $\Lambda = \{0\}$, and $q= p_c$.
Relying on the idea of Harris \cite{Harris} of constructing dual circuits around the origin together with the Russo \cite{Russo78} and Seymour and Welsh \cite{Seymour_Welsh} techniques, he proves that $\mathbb{P}^{\{0\}}_{p, p_c} (0 \leftrightarrow\infty)=0$ for any $p\in [0,1)$. 
Monotonicity implies that $\mathbb{P}^{\{0\}}_{p,q} (0\leftrightarrow\infty) =0$ if $q < p_c$, and the results of Barsky, Grimmett and Newman on percolation in half-spaces \cite{Barsky_Grimmett_Newman} imply immediately that $\mathbb{P}^{\{0\}}_{p,q} (0\leftrightarrow\infty) >0$ whenever $q > p_c$.
On the other extreme, when $\Lambda = \mathbb{Z}$  classical arguments due to Kesten show that $\mathbb{P}_{p,q}^{\mathbb{Z}} (0 \leftrightarrow \infty) >0$ iff $p+q >1$ (see page 54 in \cite{Kesten82} or Section 11.9 in \cite{Grimmett}).

For the Ising model on the square lattice, McCoy and Wu \cite{McCoy_Wu} considered the setting in which the coupling constants for horizontal edges are given by a fixed deterministic number whereas for vertical edges, all the coupling constants for edges connecting between sites in the $j^\textrm{th}$ and $j+1^\textrm{st}$ rows are given by a random variable $E(j)$.
There, the $E(j)$'s are assumed to be i.i.d.
Their main motivation was to show how the presence of inhomogeneities leads to a model where the specific heat does not diverge (and not even its derivatives) close to the critical temperature.
This contrasts with the classical results of Onsager for the Ising model on the square lattice with homogeneous coupling constants.

In \cite{Campanino_Klein}, motivated by the study of the Ising model with a random transverse field, Campanino and Klein studied the decay of the two-point function for  a $(d+1)$-dimensional bond percolation (and also Ising and Potts models) with both $d$-dimensional and $1$-dimensional disorder.

Another variation was studied by Hoffman \cite{Hoffman}. 
That paper discussed percolation in a random environment where columns of horizontal edges were weakened independently, and so were rows of vertical edges.

More recently, in \cite{Kesten_Sidoravicius_Vares}, the authors considered the $(1+1)$-directed percolation model with inhomogeneities that are transversal to the ``time direction'' (in contrast with analogous results on the contact process in \cite{Bramson_Durrett_Schonmann, Madras_Schinazi_Schonmann} where the inhomogeneities are taken along lines parallel to the ``time direction'').
We discuss their setting and their result in more detail in Section \ref{sec:outline} since they are used as a fundamental step in our work (see Theorem \ref{thm:main input} below).

\subsection{Notation}
\label{s:notation}

When there is no risk of ambiguity, we abuse notation and do not distinguish between $V(\mathbb{Z}^2)$ and $\mathbb{Z}^2$, and similarly for other graphs.
Let us write $x\sim y$ if $x$ is a {\em neighbour} of $y$ i.e.\ if $\{x,y\}\in E(\mathbb Z^2)$. For $A\subset \mathbb Z^2$, we set $\partial A:=\{x\in A:\exists\, y\notin A\text{ with }x \sim y\}$.
A path in $A$ is a sequence of sites $v_0 \sim v_1 \sim\cdots \sim v_n$ such that $v_i \in A$ for all $i$.
 \medbreak
An edge $e$ is said to be {\em open} (in $\omega$) if $\omega(e)=1$. Otherwise, it is said to be {\em closed}.
For a set $A\subset \mathbb Z^2$ and two vertices $x,y\in\mathbb Z^2$, $x$ and $y$ are {\em connected in $A$} (denoted $x\stackrel{A}{\longleftrightarrow} y$) if there exists a sequence $x=v_0\sim\dots \sim v_n=y$ in  $A$ such that $\omega(\{v_i,v_{i+1}\})=1$ for every $0\le i<n$.
If $A=\mathbb Z^2$, we omit it from the notation and simply say that $x$ and $y$ are connected.
The cluster of a site $x$ in a set $A$ is the set of all sites $y$ for which $\omega \in \{x \stackrel{A}{\longleftrightarrow} y\}$.
We denote by $\{x \longleftrightarrow \infty\}$ the event that there exists an unbounded sequence $(y_n) \subset \mathbb{Z}^2$ such that $x$ is connected to each one of the $y_n$'s.
\medbreak
For $n < m\in\mathbb Z$, set $\llbracket n,m\rrbracket=\{n,n+1,\dots,m-1,m\}$.  
A set of this form will be called an interval of $\mathbb{Z}$ and its diameter is defined to be equal to $m-n$.
For $n\ge1$ and $x\in\mathbb Z^2$, set $B_n(x):=x+\llbracket -n,n\rrbracket^2$, the box of size $n$ centred at $x$. 
Define $\mathcal A_n(x)$ to be the event that there exists an open circuit in $B_{2n-1}(x)$ surrounding $B_n(x)$, i.e.\ that there exists a path $v_0\sim v_1\sim\dots\sim v_k= v_0$ such that:
\begin{itemize}
\item For all $0 \leq i \leq k$, $v_i$ belongs to $B_{2n-1}(x) \backslash B_{n}(x)$;
\item For all $0\le i<k$, $\omega(\{v_i,v_{i+1}\})=1$;
\item The winding number of the path around $x$ is non-zero.
\end{itemize}
If $x=0$, we simply write $\mathcal A_n$ instead of $\mathcal A_n(0)$.

In what follows, we denote by $c$ a generic strictly positive constant whose value may change at each appearance.
A numbered constant such as $c_1, c_2, \ldots$ will have their value fixed at its first appearance.

\subsection{Summary of the proof}\label{sec:outline}

Let us start by recalling the results of
\cite{Kesten_Sidoravicius_Vares} and comparing them to ours. The
problem analysed in \cite{Kesten_Sidoravicius_Vares} differs from ours
in the order of quantifiers (and also in the choice of the two-dimensional
lattice). Our result is that even if the ``strong columns'' are
rare and just slightly strong, they still allow percolation. The result of
\cite{Kesten_Sidoravicius_Vares} is that even if the ``weak
columns'' are very weak, if they are sufficiently rare they do not
disrupt percolation. 

Our proof strategy is to reduce our problem to that of
\cite{Kesten_Sidoravicius_Vares} using a one-step renormalisation
procedure. This means that we find some $n$ such that columns of width
$n$ are ``good'' with high probability, and inside each good column,
each $n\times n$ block is good with high probability, while inside a
bad column, each block is good with probability bigger than some
constant independent of $n$. This will allow us to show that our
renormalised model stochastically dominates that of
\cite{Kesten_Sidoravicius_Vares}, and hence percolates.

The choice of $n$ is probably the interesting part in the procedure
and requires some knowledge of near-critical percolation. It would be
interesting to generalise our results to 3 dimensions, but our
understanding of near-critical 3-dimensional percolation currently
falls short of what is needed for the result. On the other hand, the
results of \cite{Kesten_Sidoravicius_Vares}, which are strictly
2-dimensional, are not necessary in the 3-dimensional case. It is only
the near-critical behaviour that is missing.

We will now describe the renormalisation procedure, and then return to
\cite{Kesten_Sidoravicius_Vares} and state their result in details.
%
The first step is to compare $\mathbb{P}^{\Lambda}_{p,q}$ for
$\Lambda$ without big gaps to near-critical percolation. 
More precisely, given an integer $k\ge 1$, a subset $\Lambda \subset \mathbb{Z}$ is called {\em $k$-syndetic} if it intersects all intervals of $\mathbb Z$ having diameter $k$.
The following proposition shows that, starting from critical percolation, the effect of enhancing the parameter on $E_{\text{vert}}(\Lambda \times \mathbb{Z})$ for a $k$-syndetic set $\Lambda$ is comparable to the effect of performing a certain homogeneous sprinkling.
\begin{proposition}
\label{l:Pp_c}
Let $\varepsilon \in (0, 1/2)$. There exists $c_1>0$ such that for any
$k$ large enough (depending on $\varepsilon$),
\begin{align}
\label{e:spread2}
\mathbb{P}_{p_c+\varepsilon, p_c}^{\Lambda} (\mathcal A_n)&\ge \mathbb{P}_{p_c + k^{-c_1}} (\mathcal A_n)
\end{align}
for any $k$-syndetic $\Lambda$ and any $n\ge k$.
\end{proposition}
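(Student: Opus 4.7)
The plan is to apply Russo's formula to both sides and compare the resulting pivotal-edge sums using the standard near-critical 2D percolation toolbox. Writing $\mathrm{Ann}_n := B_{2n-1}\setminus B_n$ for the only region whose edges can be pivotal for the increasing event $\mathcal{A}_n$, Russo's formula yields
\[
\frac{\partial}{\partial t}\mathbb{P}^{\Lambda}_{p_c+t,\,p_c}(\mathcal{A}_n) \;=\; \sum_{e\in E_{\mathrm{vert}}(\Lambda\times\mathbb{Z})\cap\mathrm{Ann}_n} \mathbb{P}^{\Lambda}_{p_c+t,\,p_c}(e\text{ pivotal for }\mathcal{A}_n),
\]
and analogously $\frac{d}{ds}\mathbb{P}_{p_c+s}(\mathcal{A}_n)$ is a sum of pivotal probabilities over \emph{all} edges of $\mathrm{Ann}_n$. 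Both measures collapse to $\mathbb{P}_{p_c}$ when the enhancement parameter is zero, so after integrating in $t\in[0,\varepsilon]$ on the left and in $s\in[0,k^{-c_1}]$ on the right, the proposition reduces to a pointwise comparison between the two integrands.

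Next I would invoke RSW theory and the quasi-multiplicativity of the polychromatic 4-arm probability $\pi_4(n)$. For any edge $e$ at macroscopic distance from $\partial\mathrm{Ann}_n$,
\[
\mathbb{P}_{p_c}(e\text{ pivotal for }\mathcal{A}_n) \;\asymp\; \pi_4(n),
\]
so the full pivotal sum is of order $n^2\pi_4(n)$. Because $\Lambda$ is $k$-syndetic and $n\geq k$, the number of vertical $\Lambda$-column edges in $\mathrm{Ann}_n$ is of order $n^2/k$, giving a restricted sum of order $(n^2/k)\pi_4(n)$ and a ratio of order $1/k$ between the two integrands at $t=s=0$.

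To transfer this bound to positive $t$ and $s$, I would appeal to Kesten's near-critical scaling relations: pivotal probabilities at parameter shift $r$ remain comparable to $\pi_4(n)$ as long as $n$ does not exceed the correlation length $L(p_c+r)$. On the homogeneous side this is satisfied once $s\leq k^{-c_1}$ is small enough. On the inhomogeneous side, one must additionally show that the density-$1/k$ enhancement of vertical columns does not shift the \emph{effective} correlation length of $\mathbb{P}^{\Lambda}_{p_c+t,p_c}$ below $n$; this can be carried out by coupling $\mathbb{P}^{\Lambda}_{p_c+t,p_c}$ with a slightly sprinkled homogeneous percolation. Integrating the resulting derivative inequality of the form $\frac{\partial}{\partial t}\mathbb{P}^{\Lambda}_{p_c+t,p_c}(\mathcal{A}_n)\geq \frac{c}{k}\,\frac{d}{ds}\mathbb{P}_{p_c+s}(\mathcal{A}_n)$ over matching windows of $t$ and $s$, and absorbing the $\varepsilon$-dependent multiplicative constants into the assumption ``$k$ large enough (depending on $\varepsilon$)'', delivers the claim for a suitable $c_1>0$.

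The main obstacle is precisely this effective-correlation-length estimate for the inhomogeneous measure, uniformly over the $k$-syndetic set $\Lambda$. Showing that boosted columns at horizontal spacing $\leq k$ shift the effective correlation length of $\mathbb{P}^{\Lambda}_{p_c+t,p_c}$ by no more than a power of $k^{-1}$ is the quantitative input that both fixes the exponent $c_1$ and explains the restriction $n\geq k$ appearing in the proposition.
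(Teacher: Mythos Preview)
Your plan has a structural gap that the paper's argument is specifically designed to avoid. You propose to compare two \emph{different} one-parameter families, $t\mapsto\mathbb{P}^{\Lambda}_{p_c+t,p_c}(\mathcal{A}_n)$ and $s\mapsto\mathbb{P}_{p_c+s}(\mathcal{A}_n)$, by showing that their Russo derivatives are within a factor $c/k$ of each other. But this comparison of pivotal probabilities is between two \emph{different} measures, and your proposed tool---Kesten's stability of arm probabilities in the scaling window---only applies when $n$ is below the correlation length of each measure. The proposition, however, must hold for \emph{all} $n\ge k$, including $n$ far beyond any correlation length. Once $n$ exceeds $L(p_c+s)$ (a quantity that is only polynomial in $k$), the identity $\mathbb{P}_{p_c+s}(e\text{ piv})\asymp\pi_4(n)$ collapses and your ratio argument has nothing to say. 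The same obstruction hits the left-hand side long before $t$ reaches $\varepsilon$. Your last paragraph identifies this as ``the main obstacle'' but then proposes to resolve it by controlling the effective correlation length of $\mathbb{P}^{\Lambda}_{p_c+t,p_c}$---which is essentially what the proposition asserts, so the argument becomes circular.

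The paper sidesteps this entirely by never comparing two measures. It views both endpoints as lying in the \emph{single} two-parameter family $(p,q)\mapsto\mathbb{P}^{\Lambda}_{p,q}(\mathcal{A}_n)$: the homogeneous measure $\mathbb{P}_{p_c+k^{-c_1}}$ is $\mathbb{P}^{\Lambda}_{p_c+k^{-c_1},\,p_c+k^{-c_1}}$, and the target is $\mathbb{P}^{\Lambda}_{p_c+\varepsilon,\,p_c}$. One then only needs the differential inequality
\[
\frac{\partial}{\partial q}\mathbb{P}^{\Lambda}_{p,q}(\mathcal{A}_n)\;\le\;k^{c_4}\,\frac{\partial}{\partial p}\mathbb{P}^{\Lambda}_{p,q}(\mathcal{A}_n)
\]
along a path in $(p,q)$-space joining the two endpoints (this is the Aizenman--Grimmett mechanism). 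Crucially, this inequality compares pivotality of an edge $e\notin E_{\mathrm{vert}}(\Lambda\times\mathbb{Z})$ to that of a nearby edge $f\in E_{\mathrm{vert}}(\Lambda\times\mathbb{Z})$ \emph{under the same measure and in the same configuration}, via a local surgery in a box of size $O(k)$. The cost of the surgery is polynomial in $k$ by RSW, uniformly in $n$ and in $(p,q)$; no global arm estimate, no scaling window, and no knowledge of any correlation length is required.
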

The proof is based on some quantitative estimates for non-local and non-translation invariant versions of enhancements. 
Local and translation invariant enhancements were studied by Aizenman and Grimmett in \cite{Aizenman_Grimmett}.


Let $\varepsilon \in (0,1/2)$ be fixed.
We wish to prove that for any $c_2>0$ and $\delta>0$, there exists $n$ large enough such that for any  $(c_2\log n)$-syndetic set $\Lambda$,
\begin{align}
\label{e:spread3}
\mathbb{P}_{p_c+\varepsilon, p_c}^{\Lambda} (\mathcal A_n)&\ge 1-\delta.
\end{align}
In order to do that, we invoke general statements coming from the theory of near-critical percolation to prove the following proposition, which together with Proposition~\ref{l:Pp_c}, implies \eqref{e:spread3}.
\begin{proposition}
\label{l:limpp_c}
For any $c_3>0$,  we have
$$\lim_{n\to\infty} \mathbb{P}_{p_c + (\log n)^{-c_3}} (\mathcal A_n)=1.$$
\end{proposition}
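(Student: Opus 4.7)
The plan is to invoke two well-known inputs from near-critical two-dimensional percolation theory: a polynomial upper bound on the finite-size correlation length above $p_c$, and the ``supercritical RSW'' statement that crossings of bounded-aspect-ratio rectangles at scales well above the correlation length have probability close to $1$. Combined, they reduce the statement to a polylogarithmic vs.\ polynomial comparison.

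More concretely, set $p_n := p_c+(\log n)^{-c_3}$ and define the finite-size correlation length
\[
L(p) := \inf\Bigl\{\,N\ge 1 : \mathbb{P}_p\bigl(\text{horizontal crossing of }\llbracket 0,N\rrbracket^2\bigr)\ge 1-\tfrac{1}{100}\Bigr\}
\]
for $p>p_c$. Kesten's scaling relations for $\mathbb{Z}^2$ near-critical percolation give a polynomial bound of the form $L(p)\le C(p-p_c)^{-\nu}$ for some universal $\nu>0$ and all $p\in(p_c,p_c+1/4)$. Substituting $p=p_n$ yields
\[
L(p_n) \;\le\; C(\log n)^{c_3\nu},
\]
which is polylogarithmic in $n$. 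Hence for every fixed $K$, eventually $n\ge K\,L(p_n)$.

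Next, the supercritical finite-size scaling statement (see e.g.\ Kesten \cite{Kesten87} or Grimmett \cite{Grimmett}) says: for every $\delta>0$ there exists $K(\delta)<\infty$ such that for every $p>p_c$ and every axis-aligned rectangle $R$ of aspect ratio between $1$ and $4$ whose shorter side has length at least $K(\delta)\,L(p)$,
\[
\mathbb{P}_p\bigl(\text{crossing of }R\text{ in the easy direction}\bigr)\;\ge\;1-\delta.
\]
The event $\mathcal{A}_n$ is implied by the intersection of four such rectangle crossings (the vertical crossings of the left and right strips $\llbracket \pm n,\pm (2n-1)\rrbracket\times\llbracket -(2n-1),2n-1\rrbracket$ and the horizontal crossings of the top and bottom strips), whose concatenation is a circuit around $B_n$ lying in $B_{2n-1}\setminus B_n$. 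Each of these rectangles has aspect ratio between $2$ and $4$ and shorter side $n-1$, which exceeds $K(\delta)\,L(p_n)$ once $n$ is sufficiently large, by the polylogarithmic bound just established. Applying FKG to the four increasing crossing events, we obtain
\[
\mathbb{P}_{p_n}(\mathcal{A}_n)\;\ge\;(1-\delta)^4\;\ge\;1-4\delta
\]
for all $n$ large enough, and since $\delta>0$ was arbitrary this proves the proposition.

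The main obstacle is not the argument itself but the careful importation of the two near-critical inputs, and in particular verifying that the polynomial bound on $L(p)$ and the supercritical RSW estimate hold with constants independent of how close $p$ is to $p_c$, so that one may let $p=p_n\searrow p_c$ along a sequence while keeping the comparison effective. This is precisely the ``near-critical knowledge in two dimensions'' that, as emphasised in the outline, is unavailable in three dimensions and underlies the dimensional restriction of the whole paper.
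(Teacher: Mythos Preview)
Your argument follows the same route as the paper's: a polynomial upper bound on the correlation length $L(p)$ (the paper derives it explicitly from a lower bound $\mathbb P_{p_c}(\mathcal E_4(n))\ge n^{-c_{12}}$ with $c_{12}<2$ combined with Kesten's relation $(p-p_c)L(p)^2\,\mathbb P_{p_c}(\mathcal E_4(L(p)))\asymp 1$, whereas you cite the bound as a black box), followed by FKG-gluing of rectangle crossings at scale $n\gg L(p_n)$ into a circuit. One slip to fix: the four crossings you describe are in the \emph{long} (hard) direction of their strips, not the ``easy direction''---but this is exactly what the correlation-length property controls, so the argument is unaffected once the wording is corrected.
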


We now return to the results of \cite{Kesten_Sidoravicius_Vares}. As
already mentioned, they are on a different 2-dimensional (directed) lattice which we describe next. 

Let $\nesearrows$ denote the lattice with sites given by $V(\nesearrows) = \{ x= (x_1,x_2) \in \mathbb{Z} \times \mathbb{Z}:\, x_1 + x_2 \text{ is even}\}$ and oriented edges $E(\nesearrows) = \{ [x,y] \subset V(\nesearrows): y_1 - x_1 = 1 \text{ and }  |x_2 - y_2| = 1 \}$.
Note that only edges oriented in the \emph{north-east} or \emph{south-east} direction are allowed.
As before we denote $x \sim y$ if $[x,y] \in E(\nesearrows)$.

A \emph{column} is a set of the type ${c}(i) = \{(i,j) \in V(\nesearrows); \, j \in \mathbb{Z}\}$. 
Fix $p_B$, $p_G$ and $\rho'$ in the interval $(0,1)$.
Let $\widetilde{\Lambda}\subset\mathbb{Z}$ be a random set such that the events $\{i\in\widetilde\Lambda\}$ are i.i.d.\ with probability $\rho'$ and declare the column ${c}(i)$ to be \emph{good} if $i\in\widetilde\Lambda$.
Columns that are not good are called \emph{bad} columns.
Conditionally on the state of the columns, we then declare each site in bad columns to be occupied or vacant with probability $p_B$ and $1-p_B$, respectively.
Similarly we declare each site in a good column to be occupied or vacant with probability $p_G$ and $1-p_G$, respectively.
Conditioned on $\tilde{\Lambda}$, the state of each site is decided independently of the others.
We denote by $\tilde{\mathbb{P}}^{\tilde{\Lambda}}_{p_B,p_G}$ the law in $\{0,1\}^{V(\nesearrows)}$ conditional on the state of the columns $\widetilde{\Lambda}$.

For a configuration $\omega \in \{0,1\}^{V(\nesearrows)}$,
we say that the origin belongs to an infinite connected component for oriented percolation in $\nesearrows$ if there exists an infinite sequence $0 = v_0 \sim v_1 \sim v_2 \sim \cdots$ with $v_i \neq v_j$ when $i\neq j$ and such that $\omega(v_i)=1$ for all $i \geq 0$.
The critical percolation on $\nesearrows$ will be denoted by $p_c(\nesearrows)$.

We are ready to state the main input to our renormalisation scheme.

\begin{theorem}[Kesten, Sidoravicius, Vares \cite{Kesten_Sidoravicius_Vares}]\label{thm:main input}
Assume that $p_B >0$ and that $p_G> p_c(\nesearrows)$.
Then, there exists a $\rho'<1$ such that, for almost all realisations of $\widetilde{\Lambda}$,
\[
\tilde{\mathbb{P}}^{\tilde{\Lambda}}_{p_B,p_G}  (0 \text{ belongs to an infinite oriented connected component}) > 0.
\]
\end{theorem}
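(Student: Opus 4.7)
The plan is to track the \emph{front} $X_n = \{h \in \mathbb{Z} : (n,h) \text{ belongs to the cluster of the origin}\}$ and show that, for $\rho'$ sufficiently close to $1$, with positive probability $|X_n|$ grows at least geometrically in $n$. The argument is a one-step renormalization driven by two elementary estimates: each good column expands the front (supercritical oriented percolation at rate $p_G > p_c(\nesearrows)$), and each bad column compresses but does not kill it (binomial thinning at rate $p_B$). With $\rho'$ close enough to $1$, the expansion from good columns will beat the compression from bad ones.

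For the \emph{good-column step}, a standard block argument for $1{+}1$-dimensional oriented percolation (in the spirit of Durrett's monograph) gives that a run of $T$ consecutive good columns starting from a front of size $M$ produces a front of size at least $(1+c_G)^T M$ except on an event of probability at most $\exp(-c'_G M)$. For the \emph{bad-column step}, conditional on $|X_n|=M$ with $M \gg 1/p_B$, the successor candidates in column $n+1$ are $M+1$ or more independent $\mathrm{Bernoulli}(p_B)$ sites, so a Chernoff bound yields $|X_{n+1}| \ge \tfrac{1}{2} p_B M$ with probability $1 - \exp(-c\, p_B M)$. Over an interval of length $L$ containing $\approx L(1-\rho')$ bad columns (by large deviations for $\nu_{\rho'}$), the multiplicative growth of $|X_n|$ is at least
\[
(1+c_G)^{\rho' L}\, (\tfrac{1}{2} p_B)^{(1-\rho') L},
\]
which is exponentially large provided $\rho' \log(1+c_G) > (1-\rho') \log(2/p_B)$; I would then pick $\rho'$ close to $1$ to secure this inequality.

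The main obstacle is handling the \emph{rare but catastrophic} events: anomalously long runs of bad columns that push the front size below the $1/p_B$ threshold needed for Chernoff to bite. I would control these by fixing the renormalization scale $L$ large enough (depending on $p_B, p_G, \rho'$) so that the $\nu_{\rho'}$-probability of such a bad run occurring in $\llbracket -N, N \rrbracket$ is summable in $N$, and using Borel--Cantelli to rule them out almost surely in the environment. A separate seed argument, in which the first $O(\log(1/p_B))$ columns are required to be good and to percolate up to height $\Theta(1/p_B)$, produces an initial front of the required size with positive probability, using only $p_G > p_c(\nesearrows)$ and $p_B > 0$; once the seed is in place, the geometric growth above takes over. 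Dependencies between consecutive columns are absorbed by the standard stochastic domination of block percolation by independent Bernoulli sites. The output is positive annealed percolation probability, which by ergodicity of $\nu_{\rho'}$ translates into a $\nu_{\rho'}$-a.s.\ positive quenched probability, as claimed.
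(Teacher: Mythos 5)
First, a remark on scope: Theorem~\ref{thm:main input} is imported from \cite{Kesten_Sidoravicius_Vares} and the present paper does not reprove it, so there is no in-paper proof to compare against; I will evaluate your attempt on its own terms.

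The central step of your argument --- that a run of $T$ good columns starting from a front of size $M$ produces a front of size at least $(1+c_G)^T M$ with probability $1-\exp(-c_G' M)$, for some fixed $c_G>0$ --- is false, and the error is not merely technical. The front $X_n$ at column $n$ contains at most $n+1$ sites when starting from a single seed, so no quantity of front-size type can grow exponentially in the column index. Concretely, even in homogeneous supercritical oriented percolation (all columns good), if $X_n$ is a saturated interval of $M$ sites, the candidate sites in column $n+1$ number roughly $M+1$ and are occupied independently with probability $p_G<1$, so $\mathbb{E}|X_{n+1}| \approx p_G(M+1) < M$: the cardinality typically \emph{shrinks} in a single step once the front is dense. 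What Durrett-style block arguments actually provide is linear spread: the edges of the infected region move at constant speed $\pm\alpha$, so the \emph{width} of the front interval grows additively like $2\alpha T$, and the \emph{density} of occupied sites inside relaxes toward a constant $\theta(p_G)>0$. There is no uniform multiplicative constant $1+c_G$. Sparse fronts do enjoy a transient multiplicative regime (each isolated survivor spawns its own growing cone), but this saturates as soon as the cones merge, so the growth factor over $T$ steps is order $\theta T$, not $(1+c_G)^T$.

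This matters because the whole balance $(1+c_G)^{\rho'L}(\tfrac12 p_B)^{(1-\rho')L}\gg 1$ is what drives your estimate, and it pits a fictitious geometric gain against a genuine geometric loss. With the correct additive gain, a straight product bookkeeping of $|X_n|$ cannot absorb the geometric damage from bad runs; one must instead track the width $W$ and the front density separately. The survival heuristic is then roughly $\theta W\, p_B^{k_{\max}}\gg 1$, where $k_{\max}\asymp \log n/\log\bigl(1/(1-\rho')\bigr)$ is the longest bad run in the first $n$ columns and $W\asymp \alpha\rho' n$ is the accumulated width, giving the condition $\log(1/p_B)<\log\bigl(1/(1-\rho')\bigr)$, i.e.\ $1-\rho'<p_B$ up to constants. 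Turning this heuristic into a proof is exactly where the difficulty lies: after a catastrophic bad run the surviving sites are sparse, the subsequent good run must regenerate both width and density before the next bad run arrives, and this interaction between run lengths forces the multiscale structure of the actual argument in \cite{Kesten_Sidoravicius_Vares}. Your treatment of the bad-column step (binomial thinning and Chernoff), the seed construction, and the final passage from annealed to quenched via ergodicity are all sound; it is specifically the good-column expansion lemma that needs to be replaced.
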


We finally get to the renormalisation scheme. 
Let $n$ be an integer. 
We think about $2n\nesearrows$ as a subset of $\mathbb Z^2$ and examine the events $\mathcal A_n(2nv)$ for $v\in\nesearrows$ (see Figure \ref{fig:L} below).
We say that the $i^\textrm{th}$ column of $\nesearrows$ is {\em good} if $\Lambda$ intersects every subinterval of $\llbracket 2n(i-1), 2n(i+1)\rrbracket$ that has diameter $\lceil \frac{2}{\rho} \log(2n) \rceil$. Otherwise, the column is said to be {\em bad}.

Now, $v\in \nesearrows$ is said to be {\em occupied} if $\mathcal A_n(2nv)$ occurs.  On the one hand, for $v$ in a bad column, classic crossing estimates at criticality imply that the probability of such $v$ being occupied is larger than some constant $c>0$ independent of $n$. On the other hand, for $v$ in a good column, \eqref{e:spread3} implies that the probability of being occupied can be made as close to 1 as we wish, provided that $n$ is chosen large. Denote $X(v)=\mathbf{1}[\mathcal A_n(2nv)]$ for brevity.

Note that $X(v)$ and $X(w)$ are not
independent if $v$ and $w$ are neighbours in $\nesearrows$. They are
only $1$-dependent i.e.\ each $X(v)$ is independent of
$\{X(w):|v-w|>2\}$. 
Similarly, the events that columns $i$
and $i+1$ are good are not independent. Nevertheless, one may compare
these $1$-dependent events with independent percolation using
standard methods such as Liggett-Schonmann-Stacey
\cite{Liggett_Schonmann_Stacey}.

The renormalisation scheme is now clear: By choosing $n$ large enough,
one may guarantee that each column is good with probability close to
1, and that every vertex in a good column is occupied with good
probability, move from $1$-independent events to truly independent
events, and then apply Theorem \ref{thm:main input}. The details
occupy the remainder of the paper.

\section{Crossing estimates for \texorpdfstring{$k$}{k}-syndetic sets}\label{sec:2}

In this section we prove Proposition \ref{l:Pp_c} (which states that $\mathbb{P}^{\Lambda}_{p_c+\varepsilon, p_c} (\mathcal{A}_n) \geq \mathbb{P}_{p_c+k^{-c_1}} (\mathcal{A}_n)$). 
We start with the approach of Aizenman and Grimmett \cite{Aizenman_Grimmett} which allows to reduce the problem to a problem about comparison of pivotality probabilities. In other words, to reduce Proposition \ref{l:Pp_c} to the following.

\begin{proposition}
\label{l:russo}
Let $\ep \in (0,1/2)$. There exists $c_4>0$ such that for any $k$
large enough, any $k$-syndetic set $\Lambda\subset\mathbb{Z}$, any $n \geq k$ and any $(p,q) \in [p_c, p_c+\varepsilon] \times [p_c-k^{-2}, p_c+k^{-2}]$,
\begin{equation}\label{eq:spread4}
 \frac{\partial }{\partial q}\mathbb{P}_{p,q}^{\Lambda} (\mathcal A_n)\le k^{c_4}\cdot\frac{\partial}{\partial p}\mathbb{P}_{p,q}^{\Lambda} ( \mathcal A_n).
\end{equation}
\end{proposition}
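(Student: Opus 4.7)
The starting point is Russo's formula, which writes
\begin{align*}
\frac{\partial}{\partial p}\mathbb{P}_{p,q}^{\Lambda}(\mathcal{A}_n) &= \sum_{e \in E_{\mathrm{ver}}(\Lambda \times \mathbb{Z})} \mathbb{P}_{p,q}^{\Lambda}(e \text{ pivotal for } \mathcal{A}_n),\\
\frac{\partial}{\partial q}\mathbb{P}_{p,q}^{\Lambda}(\mathcal{A}_n) &= \sum_{e \notin E_{\mathrm{ver}}(\Lambda \times \mathbb{Z})} \mathbb{P}_{p,q}^{\Lambda}(e \text{ pivotal for } \mathcal{A}_n),
\end{align*}
the sums being effectively over edges in a neighbourhood of $B_{2n-1} \setminus B_n$. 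The plan is to establish a pointwise comparison
$$\mathbb{P}_{p,q}^{\Lambda}(e \text{ pivotal for } \mathcal{A}_n) \le k^{O(1)}\, \mathbb{P}_{p,q}^{\Lambda}(e' \text{ pivotal for } \mathcal{A}_n)$$
for each $e \notin E_{\mathrm{ver}}(\Lambda \times \mathbb{Z})$, where $e' \in E_{\mathrm{ver}}(\Lambda \times \mathbb{Z})$ is a companion edge chosen within graph distance $O(k)$ of $e$. Such an $e'$ exists by $k$-syndeticity: if $e$ has horizontal coordinate $x_1$, pick $i \in \Lambda \cap [x_1-k,x_1+k]$ and let $e'$ be a vertical edge of the column above $i$ at height close to that of $e$. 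Summing over $e$, and noting that the assignment $e \mapsto e'$ can be arranged so that each $e'$ serves at most $O(k)$ edges, yields \eqref{eq:spread4}.

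For the pointwise comparison I would follow the pivotality-transfer framework of Aizenman--Grimmett \cite{Aizenman_Grimmett}. Pivotality of $e$ for $\mathcal{A}_n$ is equivalent to the occurrence at $e$ of the standard four-arm event in the annulus (two disjoint open-primal arms and two disjoint closed-dual arms reaching the outer boundary). Conditioning on the configuration of edges outside a box of radius $O(k)$ that contains both $e$ and $e'$, I would construct inside this box an explicit rerouting of the four arms so that they emerge from the endpoints of $e'$ rather than from $e$, together with a bounded-energy flip of a few edges near $e'$ putting $e'$ in the pivotal state. The task is to bound the multiplicative cost of this local surgery by $k^{O(1)}$.

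The main obstacle is precisely this cost control: a naive finite-energy bound over a path of length $k$ pays a prohibitive factor $C^k$. To beat this, I would exploit that $q \in [p_c - k^{-2}, p_c + k^{-2}]$ places the correlation length of the $q$-background well above $k$, so on scales up to $k$ the ambient system looks essentially critical. In that regime the arm rerouting can be effected by gluing $O(1)$ RSW rectangles using quasi-multiplicativity of the four-arm event, at cost only polynomial in $k$. These RSW and arm estimates are classical for homogeneous near-critical percolation but must be imported to the inhomogeneous measure $\mathbb{P}_{p,q}^{\Lambda}$, whose $\Lambda$-columns carry a parameter $p$ that may be strongly supercritical. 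I would attempt this either by stochastically sandwiching $\mathbb{P}_{p,q}^{\Lambda}$ between the homogeneous measures $\mathbb{P}_{q-k^{-2}}$ and $\mathbb{P}_{p_c+\varepsilon}$ (both near-critical on scale $k$), or, if that is too lossy, by a direct adaptation of RSW exploiting the one-dimensionality of the enhanced columns and the fact that horizontal crossings can be rerouted around any single column at bounded cost.
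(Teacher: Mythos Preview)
Your overall architecture is exactly that of the paper: Russo's formula, a many-to-one assignment $e\mapsto e'$ with $e'\in E_{\rm ver}(\Lambda\times\mathbb Z)$ at distance $\le k$ (each $e'$ used $O(k)$ times), and then a local surgery in a box of radius $O(k)$ to transfer pivotality from $e$ to $e'$ at polynomial cost. Where you diverge is in the execution of the surgery, and there you have missed the one observation that makes the whole thing clean.

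The point is that $e'$ should not be \emph{any} $\Lambda$-edge within distance $k$, but the \emph{closest} one. Then the surgery box $B$ can be chosen so that $e'\subset\partial B$ and the interior $B\setminus\partial B$ contains \emph{no} $\Lambda$-column at all. Inside such a $B$ every edge carries the parameter $q$, the measure restricted to $B$ is homogeneous, and since $|q-p_c|\le k^{-2}$ while $B$ has $O(k^2)$ edges, the Radon--Nikodym derivative of $\mathbb P_{p,q}^{\Lambda}$ against $\mathbb P_{p_c}$ on configurations in $B\setminus\partial B$ is bounded by a constant independent of $k$. The local surgery event (paper's $\mathcal G_{a,b}$: $e'$ pivotal for $a\leftrightarrow b$ in $B$, together with dual blocking of $a$ and $b$ from $\mathbb Z^2\setminus B$) is then estimated directly under $\mathbb P_{p_c}$ by standard RSW, giving the $k^{-c}$ lower bound. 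No near-critical correlation-length input, no quasi-multiplicativity, and no inhomogeneous RSW are needed.

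By contrast, your proposed workarounds are either overkill or actually break. The stochastic sandwich $\mathbb P_{p_c-k^{-2}}\preceq\mathbb P_{p,q}^{\Lambda}\preceq\mathbb P_{p_c+\varepsilon}$ is correct as a domination, but the upper measure $\mathbb P_{p_c+\varepsilon}$ is \emph{not} near-critical on scale $k$: its correlation length is $O_\varepsilon(1)$, so dual (closed) arms of length $k$ have probability $e^{-ck}$ there, and the surgery event---which necessarily contains dual paths crossing the box---would receive only an exponentially small lower bound. A direct inhomogeneous RSW adaptation might be made to work, but it is unnecessary once you realise the box can be taken $\Lambda$-free.
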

Before proving this result, let us show how it implies Proposition~\ref{l:Pp_c} (as mentioned above, our argument is similar to the one in \cite{Aizenman_Grimmett}).
\begin{proof}[Proof of Proposition~\ref{l:Pp_c}]
Choose $c_1>\max\{c_4,2\}$ and let $k$ be large enough so that $k^{-c_1}<\min\{k^{-2},\ep/(2k^{c_4})\}$ and that the previous proposition applies.
Let $\Lambda$ be a $k$-syndetic set and $n \geq k$. For any $t\in [0,1]$, let us define
\[
p(t) = p_c + (1-t)k^{-c_1} + t \ep \qquad \text{and} \qquad q(t) = p_c + (1-t)k^{-c_1}.\]
With the notation $f(p,q):=\mathbb{P}_{p,q}^{\Lambda} (\mathcal A_n)$,
which is a polynomial in $p$ and $q$ and in particular
differentiable, we find
\begin{align}
\label{e:ddt}
\frac{d}{dt} f(p(t),q(t))&= p'(t) \frac{\partial}{\partial p} f(p(t),q(t)) + q'(t) \frac{\partial}{\partial q}f(p(t),q(t))\\
&= \left(-k^{-c_1} + \ep \right) \frac{\partial}{\partial p}f(p(t),q(t))  - k^{-c_1} \frac{\partial}{\partial q}f(p(t),q(t)) .
\end{align}
Since $(p(t),q(t)) \in [p_c, p_c+\varepsilon] \times [p_c, p_c+k^{-2}]$ for all $t \in [0,1]$, Proposition \ref{l:russo} implies
\[
\frac{d}{dt} f(p(t),q(t)) \geq \big(-k^{-c_1} + \ep -k^{c_4-c_1}\big)\, \frac{\partial}{\partial p} f(p(t),q(t))\ge 0
\]
from which we conclude $f(p(0),q(0)) \leq f(p(1),q(1))$, a fact which gives \eqref{e:spread2}.
\end{proof}

We now focus on the proof of Proposition~\ref{l:russo}.
We will need the notion of dual configuration. 
Let $(\mathbb Z^2)^*=(\tfrac12,\tfrac12)+\mathbb Z^2$. 
Vertices and edges of $(\mathbb Z^2)^*$ are called dual vertices and dual edges. 
Each edge $e$ of $\mathbb Z^2$ corresponds to a dual edge $e^*$ of $(\mathbb Z^2)^*$ that it intersects in its middle.
As before, we write $u \sim v$ if $u$ and $v$ are endpoints of a dual edge.
Also define $B^*$ to be the subset of $(\mathbb Z^2)^*$ of endpoints of dual edges of the form $\{x,y\}^*$ with $x,y\in B$.

Define the dual configuration $\omega^*\in\{0,1\}^{E((\mathbb Z^2)^*)}$ of $\omega\in\{0,1\}^{E(\mathbb Z^2)}$ by $\omega^*(e^*)=1-\omega(e)$. 
Two dual vertices $u$ and $v$ of $(\mathbb Z^2)^*$ are {\em dual-connected} in $V\subset (\mathbb Z^2)^*$ if there exists $u=v_0\sim\dots\sim v_k=v$ such that $v_i  \in V$ for every $0 \leq i \leq k$ and $\omega^*(\{v_i,v_{i+1}\})=1$ for every $0\le i<k$. 
We denote this event by $u\lr[*,V] v$.

\begin{proof}[Proof of Proposition~\ref{l:russo}]
Let $\Lambda$ be $k$-syndetic for some $k \geq 100$ and $(p,q) \in [p_c, p_c+\varepsilon]\times [p_c-k^{-2}, p_c + k^{-2}]$.

Define $E$ to be the set of edges of $\mathbb Z^2$ with both endpoints in $B_{2n-1}\setminus B_n$.
Define also $F=E\cap E_{\rm vert}(\Lambda\times\mathbb Z)$.
For an edge $e=\{x,y\}\in E\setminus F$, let $f(e)\in F$ be a minimiser of the $\|\cdot\|_1$-distance between $\{x,y\}$ and $F$. Note that $f(e)$ may not be defined uniquely (there may be up to six such edges). In case there is more than one choice for $f(e)$, select one of them according to some arbitrary rule. For $f\in F$, let $E(f) = \{e \in E\setminus F: f(e) = f\}$. Russo's Formula (see \cite[Section 2.4]{Grimmett}) implies that
\begin{align*}
\frac{\partial}{\partial q} \mathbb{P}_{p,q}^{\Lambda} (\mathcal A_n) &= \sum_{e \in E\setminus F} \mathbb{P}_{p,q}^{\Lambda} (e \text{ is pivotal for } \mathcal A_n)= \sum_{f \in F} \sum_{e \in E(f)} \mathbb{P}_{p,q}^{\Lambda} (e \text{ is pivotal for } \mathcal A_n).\end{align*}
Now, the fact that $\Lambda$ is $k$-syndetic implies that ${\rm card}(E(f))\le 10k$. If one assumes that there exists $c_5>0$ such that for any $f\in F$ and $e\in E(f)$,
\begin{equation}\label{eq:ll}\mathbb{P}_{p,q}^{\Lambda} (e \text{ is pivotal for } \mathcal A_n)\le k^{c_5}\cdot\mathbb{P}_{p,q}^{\Lambda} (f \text{ is pivotal for } \mathcal A_n),\end{equation}
then we may deduce that for $k$ large enough,
\begin{align*}
\frac{\partial}{\partial q} \mathbb{P}_{p,q}^{\Lambda} (\mathcal A_n)
&\le \sum_{f \in F} {\rm card}(E(f))\cdot\max\big\{\mathbb{P}_{p,q}^{\Lambda} (e \text{ is pivotal for } \mathcal A_n):e\in E(f)\big\}\\
&\le 10k \cdot \sum_{f\in F} \max\big\{\mathbb{P}_{p,q}^{\Lambda} (e \text{ is pivotal for } \mathcal A_n):e\in E(f)\big\}\\
&\le 10 k\cdot k^{c_5} \sum_{f\in F}\mathbb{P}_{p,q}^{\Lambda} (f \text{ is pivotal for } \mathcal A_n)=10k^{c_5+1} \frac{\partial}{\partial p} \mathbb{P}_{p,q}^{\Lambda} (\mathcal A_n),
\end{align*}
where we used Russo's Formula in the last equality. 
This implies the claim with $c_4>c_5+1$ and $k$ large enough. 
\bigbreak
We therefore focus on the proof of \eqref{eq:ll}. 
Fix $f\in F$ and $e=\{x,y\}\in E(f)$. 
By definition of $E(f)$, there exist $z$ and $\ell\le k$ such that $B:=B_\ell(z)$ satisfies (see Figure \ref{fig:eventPab}).
\begin{itemize}
\item $B \subset B_{2n-1}\setminus B_n$,
\item $f$ has both endpoints in $\partial B$,
\item $e$ has both endpoints in $B$,
\item $\Lambda\times\mathbb Z$ does not intersect $B\setminus \partial B$.
\end{itemize}
The proof is going to be based on surgery in the box $B$ (and its immediate neighbourhood). 
For $a,b\in\partial B$, let $\mathcal Q_{a,b}$ be the event that there
is an open path $\gamma$ from $a$ to $b$ in $B_{2n-1}\setminus(B_n\cup B)$
which surrounds $B_n$, or to be more precise, that can be completed to a path surrounding $B_n$ by adding a path from $a$ to $b$ contained in $B$. 
Let also
\begin{align}
\mathcal P_{a,b}&=\{e\text{ is pivotal for }\mathcal A_n\}\cap \mathcal{Q}_{a,b},\\
\label{e:Aab}
\mathcal G_{a,b}&=\{f\text{ is pivotal for
}a\lr[B]b\}\cap\{a\nleftrightarrow \mathbb Z^2\setminus
B\}\cap\{b\nleftrightarrow\mathbb Z^2\setminus B\}.
\end{align}

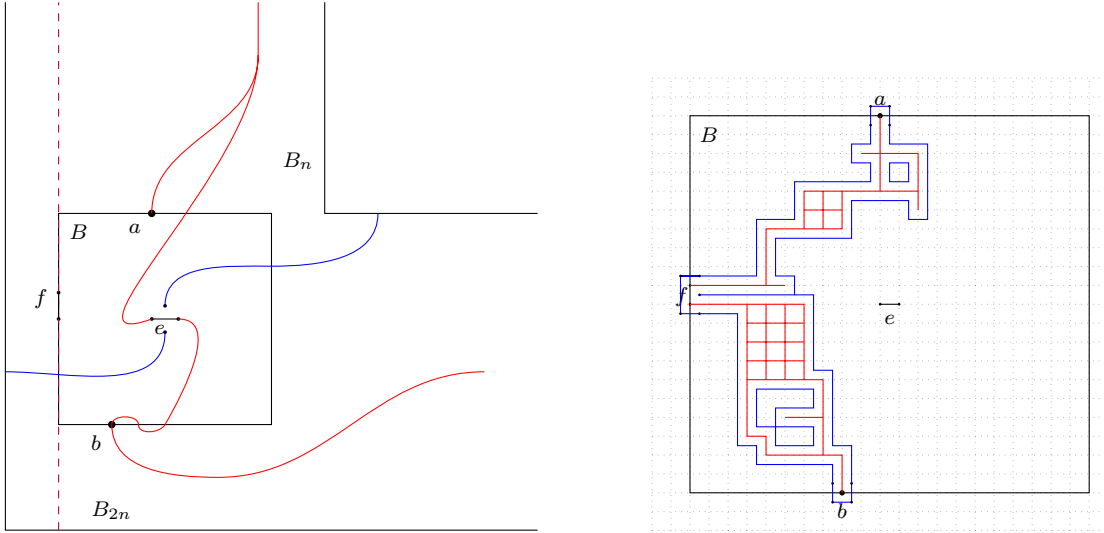
\begin{figure}
 \centering
  \begin{subfigure}[b]{0.5\textwidth}
\begin{tikzpicture}[scale=.7]
\draw (0,10) -- (0,0) -- (10,0);
\node[above] at (2,0) {\tiny $B_{2n}$};
\draw (6,10) -- (6,6) -- (10,6);
\node[left] at (6,7) {\tiny $B_n$};
\draw (1,2) rectangle (5,6);
\node[below right] at (1,6) {\tiny $B$};
\coordinate (a) at (2.75,6);
\coordinate (b) at (2,2);
\coordinate (x) at (2.75,4);
\coordinate (y) at (3.25,4);
\coordinate (z) at (1,4);
\coordinate (w) at (1,4.5);
\fill (x) circle [radius=1pt]
      (y) circle [radius=1pt]
      (a) circle [radius=2pt] node[below left] {\tiny $a$}
      (b) circle [radius=2pt] node[below left] {\tiny $b$}
      (z) circle[radius=1pt] node[above left] {\tiny $f$}
      (w) circle[radius=1pt]
      ($(x)+(.25,-.25)$) circle [radius=1pt]
      ($(x)+(.25,.25)$) circle [radius=1pt];
\draw (2.75,4)--(3.25,4);
\node[below] (e) at (2.9,4.1) {\tiny $e$};

\draw[red] (a) to [out=90, in=270] ($(a)+(2,3)$) to [out=90, in=270] ($(a)+(2,4)$)
          (x) to [out =200, in=270] ($(a)+(2,3)$)
          (b) to [out=270, in=180] ($(b)+(2,-1)$) to [out=0, in=180] ($(b)+(7,1)$)
          (b) to [out=90, in=90] ($(b)+(.5,0)$) to [out=270, in=240] ($(b)+(1,0)$) to [out=60, in=0] (y);

\draw[blue] (0,3) to [out=0, in=270] ($(x)+(.25,-.25)$)
	    ($(x)+(.25,.25)$) to [out=90, in=180] (5,5) to [out=0, in=270] (7,6);
	    
\draw[dashed, color=purple] (1,0) -- (1,10);
          
\draw (z) -- (w);
\end{tikzpicture}
\end{subfigure}
\quad
\begin{subfigure}[b]{0.4\textwidth}
\begin{tikzpicture}[scale=.5]

\draw[help lines, step=.5, dotted]  (-1,-1) grid (11,11);
\draw (0,0) rectangle (10.5,10);
\node at (.5,9.5) {\tiny $B$};
\draw (5,5) -- (5.5,5)
 node at (5.25,5) [below] {\tiny $e$};
\fill (5,5) circle [radius=1pt]
(5.5, 5) circle [radius=1pt];
\coordinate (x2) at (5,10); \node[above] at (x2) {\tiny $a$}; \fill (x2) circle [radius=2pt];
\coordinate (x1) at (5.25, 9.75);
\coordinate (x3) at (4.75, 9.75);
\coordinate (x7) at (0,5);
\fill (x7) circle [radius=1pt];
\coordinate (x5) at (0,5.5); 
\fill (x5) circle [radius=1pt];
\coordinate (x6) at (.25,5.25);
\coordinate (x4) at (.25, 5.75); \node[below left] at (x4) {\tiny $f$};
\coordinate (x8) at (.25, 4.75);
\coordinate (x10) at (4,0); \node[below] at (x10) {\tiny $b$}; \fill (x10) circle [radius=2pt];
\coordinate (x9) at (3.75, .25);
\coordinate (x11) at (4.25, .25);
\draw[blue] (x4) -- ($(x4)!0.5cm!315:(x5)$)
(x3) -- ($(x3)+(0,0.5)$)
(x9) -- ($(x9)+(0,-0.5)$)
(x8) -- ($(x8)+(-0.5,0)$)
(x11) -- ($(x11)+(0,-.5)$)
(x1) -- ($(x1)+(0,0.5)$)
(x4) -- ($(x4)+(-0.5,0)$) -- ($(x8)+(-0.5,0)$)
($(x1)+(0,.5)$) -- ($(x1)+(-.5,.5)$)
($(x11)+(0,-.5)$) -- ($(x11)+(-.5,-.5)$);
\fill (x4) circle [radius=1pt]
(x1) circle [radius=1pt]
($(x1)+(0,0.5)$) circle [radius=1pt]
($(x4)+(-0.5,0)$) circle [radius=1pt]
(x3) circle [radius=1pt]
($(x3)+(0,0.5)$) circle [radius=1pt]
(x8) circle [radius=1pt]
($(x8)+(-0.5,0)$) circle [radius=1pt]
(x9) circle [radius=1pt]
($(x9)+ (0,-.5)$) circle [radius=1pt]
(x6) circle [radius=1pt]
(x11) circle [radius=1pt]
($(x11)+(0,-0.5)$) circle [radius=1pt];
\draw[red] (x2) -- ($(x2)+(0,-1)$) -- ($(x2)+(1,-1)$) -- ($(x2)+(1,-2)$) -- ($(x2)+(0,-2)$) -- ($(x2)+(-1,-2)$) -- ($(x2)+(-1,-3)$) -- ($(x2)+(-3,-3)$) -- ($(x2)+(-3,-4.5)$) -- ($(x2)+(-5,-4.5)$);
\draw[red] ($(x2)+(0,-1)$) -- ($(x2)+(-.5,-1)$) ($(x2)+(-1,-2)$) -- ($(x2)+(-2,-2)$) -- ($(x2)+(-2,-3)$) ($(x2)+(-1,-2.5)$) -- ($(x2)+(-2,-2.5)$) ($(x2)+(-1.5,-2)$) -- ($(x2)+(-1.5,-3)$) ($(x2)+(0,-1)$) -- ($(x2)+(0,-2)$) ($(x2)+(1,-2)$) -- ($(x2)+(1,-2.5)$) ($(x2)+(-3,-4.5)$) -- ($(x2)+(-2.5,-4.5)$);
\draw[red] (x10) -- ($(x10)+(0,1)$) --  ($(x10)+(-2,1)$) --  ($(x10)+(-2,1.5)$) --  ($(x10)+(-2.5,1.5)$) --  ($(x10)+(-2.5,3)$) --  ($(x10)+(-1,3)$) -- ($(x10)+(-1,5)$) -- ($(x10)+(-4,5)$);
\draw[red] ($(x10)+(-2.5,5)$) -- ($(x10)+(-2.5,3)$)  ($(x10)+(-2,5)$) -- ($(x10)+(-2,3)$) ($(x10)+(-1.5,5)$) -- ($(x10)+(-1.5,3)$) ($(x10)+(-2.5,4.5)$) -- ($(x10)+(-1,4.5)$) ($(x10)+(-2.5,4)$) -- ($(x10)+(-1,4)$) ($(x10)+(-2.5,3.5)$) -- ($(x10)+(-1,3.5)$) ($(x10)+(-1,3)$) -- ($(x10)+(-.5,3)$) -- ($(x10)+(-.5,1)$) ($(x10)+(-.5,2)$) -- ($(x10)+(-1.5,2)$);
\draw[blue] (x3) -- ($(x3)+(0,-.5)$) -- ($(x3)+(-.5,-.5)$) -- ($(x3)+(-.5,-1)$) -- ($(x3)+(0,-1)$) -- ($(x3)+(0,-1.5)$) -- ($(x3)+(-2,-1.5)$) -- ($(x3)+(-2,-2.5)$) -- ($(x3)+(-3,-2.5)$) -- ($(x3)+(-3,-4)$) -- ($(x3)+(-4.5,-4)$);
\draw[blue] (x9) -- ($(x9)+(0,.5)$) -- ($(x9)+(-2,.5)$) -- ($(x9)+(-2,1)$) -- ($(x9)+(-2.5,1)$) -- ($(x9)+(-2.5,4.5)$) -- ($(x9)+(-3.5,4.5)$);
\draw[blue] (x1) -- ($(x1)+(0,-.5)$) -- ($(x1)+(1,-.5)$) -- ($(x1)+(1,-2.5)$) -- ($(x1)+(.5,-2.5)$) -- ($(x1)+(.5,-2)$) -- ($(x1)+(-1,-2)$) -- ($(x1)+(-1,-3)$) -- ($(x1)+(-3,-3)$) -- ($(x1)+(-3,-4)$) -- ($(x1)+(-2.5,-4)$) -- ($(x1)+(-2.5,-4.5)$) -- ($(x1)+(-5,-4.5)$) 
(x11) -- ($(x11)+ (0,1)$) -- ($(x11)+ (-.5,1)$) -- ($(x11)+ (-.5,3)$) -- ($(x11)+ (-1,3)$) -- ($(x11)+ (-1,5)$) -- ($(x11)+ (-1.5,5)$);
\draw[blue] ($(x2)+(.25,-1.25)$) rectangle  ($(x2)+(.75,-1.75)$);
\draw[blue] ($(x9) +(-.5,1)$) -- ($(x9) +(-1.5,1)$) -- ($(x9) +(-1.5,1.5)$) -- ($(x9) +(-2,1.5)$) -- ($(x9) +(-2,2.5)$) -- ($(x9) +(-.5,2.5)$) -- ($(x9) +(-.5,2)$) -- ($(x9) +(-1.5,2)$) -- ($(x9) +(-1.5,1.5)$) -- ($(x9) +(-.5,1.5)$) -- ($(x9) +(-.5,1)$) ;
\end{tikzpicture}
\end{subfigure}
\caption{(In red, primal edges, in blue dual edges) On the left we depict part the event $\mathcal{P}_{a,b}$. 
To have the complete event, the red paths starting from $a$ and $b$ have to remain inside $B_{2n}\setminus B_n$ and meet at some point. 
On the right we depict the event $\mathcal{G}_{a,b}$. 
The set $C_{a,b}$ comprises all the sites that are endpoints of the red edges.}
\label{fig:eventPab}
\end{figure}

Let $C_{a,b}=C_{a,b}(\omega)$ be the union of the clusters of $a$ and $b$ in $B$ for the configuration $\omega$, that is, the set of all sites that are connected to $a$ or to $b$ in $B$ for the configuration $\omega$ (see Figure \ref{fig:eventPab}).
Also denote
$$E_{a,b}=\big\{\{u,v\}\in E(\mathbb Z^2):u=a\text{ or }b\text{, and }v\notin B\big\}.$$ 
For a pair $(\omega, \xi) \in \{0,1\}^{E(\mathbb Z^2)} \times \{0,1\}^{E(\mathbb Z^2)}$, let $\Phi (\omega, \xi)$ be defined as follows: For $e'\in E(\mathbb Z^2)$, set
\begin{equation}
\Phi (\omega, \xi)(e'):=
\begin{cases}
0 & \text{if } e'=e \text{ and } e \not\subset {C}_{a,b}(\xi), \\
\xi(e') & \text{if } e'\notin E_{a,b} \text{ and $e'$ has at least one endpoint in } C_{a,b}(\xi), \\
\omega(e') & \text{otherwise}.
\end{cases}
\end{equation}
In the above, by $e \not\subset C_{a,b}(\xi)$ we mean that at least one of the endpoints of $e$ does not belong to $C_{a,b}$.
Roughly speaking, for getting $\Phi(\omega, \xi)$ we must ``superpose'' the edges in $C_{a,b}(\xi)$ (painted in red on the right side of Figure \ref{fig:eventPab}) together with the dual edges in its immediate neighborhood (painted in blue) on the configuration $\omega$.

We now claim that $\Phi (\omega, \xi) \in \{ f \text{ is pivotal for }
\mathcal A_n\}$ for $(\omega, \xi) \in \mathcal{P}_{a,b} \times
\mathcal{G}_{a,b}$. 
To see this, first note that when $f$ is open in $\xi$, $\Phi(\omega,\xi)$ must contain an open circuit in $B_{2n-1}$ that surrounds $B_n$. 
In fact this circuit can be taken as the union of the connection between $a$ and $b$ outside $B$ from $\omega$ and the connection inside $B$ from $\xi$. 
To see that, if $f$ is closed, there is no such open circuit note that, because $e$ is pivotal for $\mathcal A_n$ in $\omega$, after $e$ is closed, $\omega$ no longer contains an open path surrounding $B_n$. 
Superimposing $C_{a,b}$ from $\xi$ over $\omega$ does
not change this because $C_{a,b}$ comes with all the closed edges that surround it apart from the ones in $E_{a,b}$.
So the only open path it could possibly add inside $B$ is from $a$ to
$b$.
However, recalling that $f$ is pivotal for $\{a\lr[B]b\}$ for $\xi$, there can be no such path when $f$ is closed.
As a consequence,
\begin{equation}\label{eq:12}
\mathbb{P}_{p,q}^{\Lambda} (f \text{ is pivotal for } \mathcal A_n) \geq \mathbb{P}_{p,q}^{\Lambda} (\Phi(\mathcal{P}_{a,b} \times \mathcal{G}_{a,b})) \geq (1-p)\mathbb{P}_{p,q}^{\Lambda}(\mathcal{P}_{a,b})\mathbb{P}_{p,q}^{\Lambda}(\mathcal{G}_{a,b}) ,
\end{equation}
where $1-p$ accounts for the eventual price for closing $e$ when necessary, and the inequality is due to the fact that the law of $\Phi (\omega, \xi)$ in $E(\mathbb{Z}^2) \setminus \{e\}$ coincides with $\mathbb{P}_{p,q}^{\Lambda}$ (since $C_{a,b}(\xi)=C$ is measurable in terms of the states of edges with one endpoint in $C$).

On the one hand, if $e$ is pivotal for $\mathcal A_n$, one of the $\mathcal P_{a,b}$ must occur, allowing us to choose $a$ and $b$ so that 
$$\mathbb{P}_{p,q}^{\Lambda}(\mathcal P_{a,b})\ge \frac{1}{(8k)^2}\mathbb P_{p,q}^\Lambda(e\text{ pivotal for }\mathcal A_n).$$ 
(we use here that $|\partial B|\le 8k$). On the other hand, Lemma~\ref{l:surgery} below implies that
$\mathbb P_{p,q}^\Lambda(\mathcal G_{a,b})\ge k^{-c_6}.$
Putting these two inequalities in \eqref{eq:12} implies \eqref{eq:ll} for $k$ large enough. This concludes the proof.\end{proof}
In order to have a complete proof of Proposition~\ref{l:russo}, we only need to prove the following lemma. Recall the definition of $\mathcal{G}_{a,b}$ in \eqref{e:Aab}.
\begin{lemma}
\label{l:surgery}
There exists $c_6>0$ such that for any $k\ge 2$, if $\Lambda \times \mathbb{Z}$ does not intersect $B_\ell(z)\setminus \partial B_\ell(z)$ with $\ell\le k$ and $z\in\mathbb Z^2$, then
$$\mathbb P_{p,q}^\Lambda(\mathcal G_{a,b})\ge k^{-c_6}$$
for any $a,b\in\partial B_\ell(z)$, any $f \in E_{\text{vert}}(\Lambda \times \mathbb{Z}) \cap \partial{B_\ell(z)}$ and any $(p,q) \in [p_c, p_c+\varepsilon] \times [p_c-k^{-2}, p_c+k^{-2}]$. 
 \end{lemma}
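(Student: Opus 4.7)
The plan is to construct explicitly a ``good'' configuration inside $B:=B_\ell(z)$ that realises $\mathcal G_{a,b}$, and then lower bound its probability using Russo--Seymour--Welsh (RSW) estimates. Two observations make this setup tractable. First, because $\Lambda\times\mathbb Z$ does not meet $B\setminus \partial B$, the law $\mathbb P^\Lambda_{p,q}$ restricted to edges with both endpoints in $B\setminus\partial B$ is simply Bernoulli$(q)$; the finitely many edges on $\partial B$ lying in $E_{\rm ver}(\Lambda\times\mathbb Z)$ carry weight $p\in[p_c,p_c+\varepsilon]\ge q$, which only helps by monotonicity. Second, $|q-p_c|\le k^{-2}$ and $\ell\le k$ give $|q-p_c|\,\ell^2\le 1$, placing us deep inside the near-critical window at scale $\ell$, so that RSW crossing probabilities for rectangles of bounded aspect ratio contained in $B$ are uniformly bounded away from $0$, independently of $\ell$ and $q$.

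\textbf{Construction.} Writing $f=\{u,v\}$ with $u,v\in\partial B$, I would realise $\mathcal G_{a,b}$ by producing, inside $B$: (i) an open primal path from $a$ to $u$ avoiding $f$; (ii) an open primal path from $v$ to $b$ avoiding $f$; (iii) a dual path in $B^*$ through $f^*$ separating the two primal paths, so that closing $f$ disconnects $a$ from $b$ inside $B$; and (iv) dual arcs in $B^*$ confining each of the two primal clusters within $B$ and closing the edges from $\{a,b,u,v\}$ to $\mathbb Z^2\setminus B$. Ingredients (i)--(iii) force the pivotality of $f$ for $\{a\lr[B]b\}$, while (iv) forces $a,b\nleftrightarrow\mathbb Z^2\setminus B$.

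\textbf{Probability estimate.} Each of (i)--(iv) is assembled from a bounded number of crossings of rectangles of bounded aspect ratio inside $B$, plus the cost of ``landing'' primal paths at the specific boundary points $a,b,u,v$. The crossings contribute constant factors by RSW, and the landings contribute at most a polynomial factor in $\ell$ governed by a boundary arm exponent. Combining these events via the FKG inequality, arranged so that they are supported on essentially disjoint edge subsets after a suitable tiling of $B$, yields
\[
\mathbb P^\Lambda_{p,q}(\mathcal G_{a,b})\ge c\,\ell^{-c_6}\ge c\,k^{-c_6},
\]
for some constant $c_6>0$. Any crude polynomial exponent $c_6$ is sufficient, since the exact value is immaterial in the application.

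\textbf{Main obstacle.} The principal technical difficulty is to carry out the geometric construction of the two corridors and of the separating dual path uniformly in the positions of $a,b$ and $f$ on $\partial B$; degenerate cases where $a$ or $b$ lies close to one of $u,v$ require some care so that the corridors do not interfere with the separating dual arc. Near-critical stability of the RSW estimates, which can be delicate in general, is essentially automatic here thanks to the relation $|q-p_c|\,\ell^2\le 1$.
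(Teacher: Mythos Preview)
Your approach is essentially the same as the paper's: reduce to (near-)critical percolation inside $B$, build primal corridors from $a$ to $u$ and $b$ to $v$ flanked by dual paths, and pay a local surgery cost at the four boundary points. The paper carries this out with an explicit cone construction (eleven cones from $a$, $b$, and the endpoints of $f$ to well-separated intervals on an inner box $\widetilde B$, plus an event $\mathcal H$ inside $\widetilde B$), but the skeleton is the one you describe.

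Two points in your writeup are inaccurate and worth correcting. First, the remark that the $p$-weighted edges on $\partial B$ ``only help by monotonicity'' is wrong: $\mathcal G_{a,b}$ is not monotone, and your own items (iii)--(iv) are decreasing, so raising $p$ on some boundary edges does not help there. The paper avoids this by pushing the RSW construction into $\widehat B=\llbracket -k+100,k-100\rrbracket^2$, entirely away from $\partial B$, and then paying an $O(1)$ local surgery cost near $a,b,u,v$; since only a bounded number of edges are forced there, their weight (whether $p$ or $q$) is irrelevant. You should argue similarly rather than invoke monotonicity. Second, you cannot ``combine these events via the FKG inequality'' when some are increasing and some decreasing; what you presumably mean is FKG within each monotone corridor and independence across corridors supported on disjoint edge sets, which is indeed what the paper does.

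One genuine (though minor) difference: the paper reduces from parameter $q$ to $p_c$ by a direct Radon--Nikodym bound --- since $|q-p_c|\le k^{-2}$ and $B$ has $O(k^2)$ edges, the density ratio is bounded by a constant --- and then quotes RSW at $p_c$. Your route via near-critical stability of RSW (using $|q-p_c|\ell^2\le 1$) is also valid but imports more machinery than needed.
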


Some readers may want to skip the proof of this lemma since it relies on very standard arguments involving the Russo-Seymour-Welsh theory at criticality \cite{Russo78,Seymour_Welsh} (see also \cite[Section 11.7]{Grimmett} for a comprehensive exposition).
For completeness, we include a proof here. 

\begin{proof}[Proof of Lemma~\ref{l:surgery}]
For simplicity, assume that $k\geq 200$ is divisible by $50$. Assume that $B:=\llbracket -k,k\rrbracket^2$. Also consider $\widehat B=\llbracket -k+100,k-100\rrbracket^2$ and $\widetilde B = \llbracket - k/2,k/2\rrbracket^2$. 
Set $f = \{c,d\}$, so that $a$, $b$, $c$ and $d$ are all lying on $\partial{B}$.
Assume that $a$, $b$, $c$ and $d$ are distinct vertices (the proof is similar in the other cases). 
\bigbreak
First, observe that $E_{\rm vert}(\Lambda\times\mathbb Z)$ does not intersect $B\setminus \partial B$. Therefore, the fact that $|q-p_c|\le k^{-2}$ and that $B$ has at most $ck^2$ edges, guarantees that there exists $c_7>0$ (independent of $k$) such that
for any configuration $\omega$ in $B\setminus\partial B$, 
$\mathbb P_{p,q}^\Lambda(\omega)\leq {c_7}\mathbb P_{p_c}(\omega)$.
So that we may focus on $p=q=p_c$.
\bigbreak
Partition each of the sides of $\partial{\widetilde B}$ into $25$ intervals of length $k/{50}$.
From this collection of intervals, select $11$ intervals $I_{1}, \ldots I_{11}$ arranged in increasing index order counter-clockwise along $\partial\widetilde B$ with the following properties:
\begin{itemize}
\item intervals are distant of $k/50$ from each other and from the corners of $\widetilde{B}$,
\item the intervals $I_1$, $I_2$ and $I_3$ are on the top (respectively left, right, bottom) side of $\partial \widetilde B$ if $a$ is on the top (respectively left, right, bottom) side of $\partial B$,
\item the intervals $I_4,\dots,I_8$ are on the left (respectively right) side of $\partial \widetilde B$ if $c$ and $d$ are on the left (respectively right) side of $\partial B$,
\item the intervals $I_9$, $I_{10}$ and $I_{11}$ are on the top (respectively left, right, bottom) side of $\partial \widetilde B$ if $b$ is on the top (respectively left, right, bottom) side of $\partial B$.
\end{itemize}
Define $\overline C_1$, $\overline C_2$ and $\overline C_3$ to be the cones from $a$ with basis $I_1$, $I_2$ and $I_3$. Similarly, define $\overline C_4,\dots,\overline C_8$ from $c$ with basis $I_4,\dots,I_8$ and $\overline C_9$, $\overline C_{10}$ and $\overline C_{11}$ from $b$ to $I_9$, $I_{10}$ and $I_{11}$. We set $C_i=\overline C_i\cap \widehat B\setminus (\widetilde B\setminus \partial \widetilde B)$ and $C_i^*=\overline C_i\cap\widehat B^*\setminus (\widetilde B^*\setminus \partial \widetilde B^*)$ (see Figure \ref{fig:cones}).

For $i = 1, \dots ,11$, select $x_i$ and $x_i^*$ in $C_i\cap\partial \widehat B$ and $C_i^*\cap\partial \widehat B^*$ respectively. Also select $z_i$ and $z_i^*$ in $C_i \cap \partial \widetilde B$ and $C_i^* \cap \partial\widetilde B^*$ respectively. Define
$$\mathcal G_i=\{x_i\lr[C_i]z_i\} \text{ and } \mathcal G_i^*=\{x_i^*\lr[*,C_i^*]z_i^*\}.$$

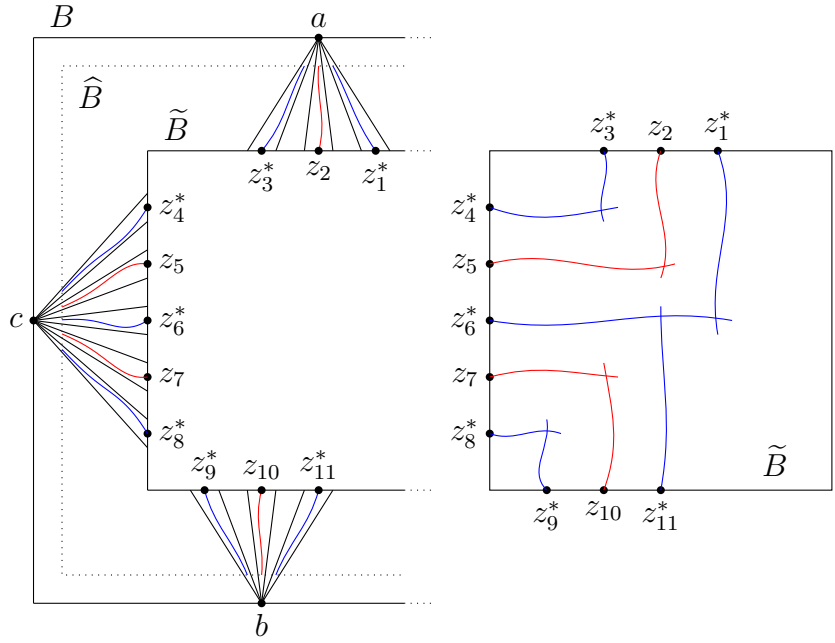
\begin{figure}
\begin{tikzpicture}[scale=.75]


\draw (0, 0) -- (0 , 10);
\draw (0,10) -- (6.5, 10);
\draw[dotted] (6.5, 10) -- (7, 10);
\draw (0,0) -- (6.5,0);
\draw[dotted] (6.5, 0) -- (7, 0);
\node[left, above] at (.5,10) {$B$};

\draw (2, 2) -- (2, 8);
\draw (2, 2) -- (6.5, 2);
\draw[dotted] (6.5, 2) -- (7, 2);
\draw (2, 8) -- (6.5, 8);
\draw[dotted] (6.5, 8) -- (7, 8);
\node[right, above] at (2.5,8) {$\widetilde{B}$};

\draw[dotted] (6.5,9.5) -- (.5,9.5) -- (.5,.5) -- (6.5,.5);
\node[right, below] at (1,9.5) {$\widehat{B}$};

\fill (0,5) circle[radius=2pt];
\node[left] at (0,5) {$c$};

\draw (0, 5) -- (2, 2.75);
\draw (0, 5) -- (2, 3.25);
\draw[blue, domain=0.5:2] plot (\x, {\x+ 5+ 0.05*\x^2*sin(\x*pi r)});
\fill (2,3) circle[radius=2pt];
\node[right] at (2,3) {$z_8^*$};

\draw (0, 5) -- (2, 3.75);
\draw (0, 5) -- (2, 4.25);
\draw[red, domain=0.5:2] plot (\x, {.5*\x+ 5 - 0.05*\x^2*sin(\x*pi r)});
\fill (2,4) circle[radius=2pt];
\node[right] at (2,4) {$z_7$};

\draw (0, 5) -- (2, 4.75);
\draw (0, 5) -- (2,  5.25);
\draw[blue, domain=.5:2] plot (\x, {5+ 0.05*\x^2*sin(\x*pi r)});
\fill (2,5) circle[radius=2pt];
\node[right] at (2,5) {$z_6^*$};

\draw (0, 5) -- (2, 5.75);
\draw (0, 5) -- (2, 6.25);
\draw[red, domain=.5:2] plot (\x, {-.5*\x + 5 + 0.05*\x^2*sin(\x*pi r)});
\fill (2,6) circle[radius=2pt];
\node[right] at (2,6) {$z_5$};

\draw (0, 5) -- (2, 6.75);
\draw (0, 5) -- (2, 7.25);
\draw[blue, domain=.5:2] plot (\x, {-\x + 5 - 0.05*\x^2*sin(\x*pi r)});
\fill (2,7) circle[radius=2pt];
\node[right] at (2,7) {$z_4^*$};

\fill (4,0) circle[radius=2pt];
\node[below] at (4,0) {$b$};

\draw (4, 0) -- (2.75, 2);
\draw (4, 0) -- (3.25, 2);
\draw[blue] (3.75,.5) to [out=112, in=285] (3,2);
\fill (3,2) circle[radius=2pt];
\node[above] at (3,2) {$z_9^*$};

\draw (4, 0) -- (3.75, 2);
\draw (4, 0) -- (4.25, 2);
\draw[red] (4, 0.5) to [out=85, in=255] (4, 2);
\fill (4,2) circle[radius=2pt];
\node[above] at (4,2) {$z_{10}$};

\draw (4, 0) -- (4.75, 2);
\draw (4, 0) -- (5.25, 2);
\draw[blue] (4.25,0.5) to [out=68, in=255] (5,2);
\fill (5,2) circle[radius=2pt];
\node[above] at (5,2) {$z_{11}^*$};

\fill (5,10) circle[radius=2pt];
\node[above] at (5,10) {$a$};

\draw (5, 10) -- (4.25, 8);
\draw (5, 10) -- (3.75, 8);
\draw[blue] (4.75,9.5) to [out=242, in=50] (4,8);
\fill (4,8) circle[radius=2pt];
\node[below] at (4,8) {$z_3^*$};

\draw (5, 10) -- (5.25, 8);
\draw (5, 10) -- (4.75, 8);
\draw[red] (5,9.5) to [out=265, in=75] (5,8);
\fill (5,8) circle[radius=2pt];
\node[below] at (5,8) {$z_2$};

\draw (5, 10) -- (6.25, 8);
\draw (5, 10) -- (5.75, 8);
\draw[blue] (5.25,9.5) to [out=298, in=125] (6,8);
\fill (6,8) circle[radius=2pt];
\node[below] at (6,8) {$z_1^*$};

\draw (8, 2) rectangle (14, 8);
\node[right, below] at (13, 3) {$\widetilde{B}$};

\fill (10,8) circle[radius=2pt];
\node[above] at (10,8) {$z_3^*$};
\draw[blue] (10,8) to [out=290, in=110] (10, 6.75);
\fill (11,8) circle[radius=2pt];
\node[above] at (11,8) {$z_2$};
\draw[red] (11,8) to [out=250, in=70] (11, 5.75);
\fill (12,8) circle[radius=2pt];
\node[above] at (12,8) {$z_1^*$};
\draw[blue] (12,8) to [out=290, in=100] (12, 4.75);

\fill (8,3) circle[radius=2pt];
\node[left] at (8,3) {$z_8^*$};
\draw[blue] (8,3) to [out=340, in=160] (9.25, 3);
\fill (8,4) circle[radius=2pt];
\node[left] at (8,4) {$z_7$};
\draw[red] (8,4) to [out=15, in=175] (10.25, 4);
\fill (8,5) circle[radius=2pt];
\node[left] at (8,5) {$z_6^*$};
\draw[blue] (8,5) to [out=350, in=170] (12.25, 5);
\fill (8,6) circle[radius=2pt];
\node[left] at (8,6) {$z_5$};
\draw[red] (8,6) to [out=15, in=195] (11.25, 6);
\fill (8,7) circle[radius=2pt];
\node[left] at (8,7) {$z_4^*$};
\draw[blue] (8,7) to [out=340, in=190] (10.25, 7);

\fill (9,2) circle[radius=2pt];
\node[below] at (9,2) {$z_9^*$};
\draw[blue] (9,2) to [out=135, in=280] (9, 3.25);
\fill (10,2) circle[radius=2pt];
\node[below] at (10,2) {$z_{10}$};
\draw[red] (10,2) to [out=70, in=280] (10, 4.25);
\fill (11,2) circle[radius=2pt];
\node[below] at (11,2) {$z_{11}^*$};
\draw[blue] (11,2) to [out=80, in=270] (11, 5.25);

\end{tikzpicture}
\caption{Dual paths are represented in blue and primal paths in red. On the left we show the events $\mathcal{G}_i$ (respectively\ $\mathcal{G}_i^*$) whose occurrence is assured by the existence of the red (respectively\ blue) paths inside the cones $C_i$  (respectively $C_i^*$).
On the right we show a simple way of constructing the event $\mathcal{H}$ once the $z_i^*$ and $z_i$ are well separated.}
\label{fig:cones}
\end{figure}

Also set 
$$\mathcal H=\big\{z_1^*\lr[*,\widetilde B^*] z_6^*, z_2\lr[\widetilde B] z_5, z_3^*\lr[*,\widetilde B^*] z_4^*,
 z_6^*\lr[*,\widetilde B^*] z_{11}^*,z_7\lr[\widetilde B] z_{10},z_8^*\lr[*,\widetilde B^*] z_9^*\big\}.$$
By a standard application of the Russo-Seymour-Welsh arguments, there exists $c_8 >0$ (not depending on the choice of the $I_i$, $x_i$, $z_i$, etc.) such that for $k$ large enough
\[
\mathbb{P}_{p_c}(\mathcal{G}_i) \geq k^{-c_8}, \quad \mathbb{P}_{p_c}(\mathcal{G}^*_i) \geq k^{-c_8}.
\]
Now, the sites $z_i$ and $z_i^*$ are all well separated so that one may again employ Russo-Seymour-Welsh arguments in order to check that
$$\mathbb P_{p_c}(\mathcal H)\ge k^{-c_8}$$
(where the value constant $c_8$ may need to be modified). 

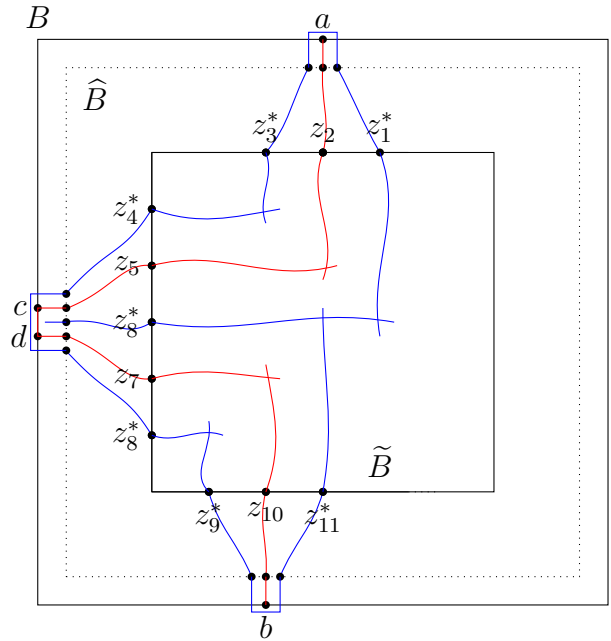
\begin{figure}
\begin{tikzpicture}[scale=.75]


\draw (0,0) rectangle (10,10);
\node[right, above] at (0,10) {$B$};

\draw (2, 2) -- (2, 8);
\draw (2, 2) -- (6.5, 2);
\draw[dotted] (6.5, 2) -- (7, 2);
\draw (2, 8) -- (6.5, 8);
\node[right, below] at (6,3) {$\widetilde{B}$};

\draw[dotted] (0.5,.5)--(.5,9.5)--(9.5,9.5)--(9.5,.5)--cycle;
\node[left, below] at (1,9.5) {$\widehat{B}$};

\fill (0,5.25) circle[radius=2pt];
\fill (0,4.75) circle[radius=2pt];
\node[left] at (0,5.25) {$c$};
\node[left] at (0,4.75) {$d$};

\draw[blue, domain=.5:2] plot (\x, {\x+ 5+ 0.05*\x^2*sin(\x*pi r)});
\fill (2,3) circle[radius=2pt];

\draw[red, domain=.5:2] plot (\x, {.5*\x+ 5 - 0.05*\x^2*sin(\x*pi r)});
\fill (2,4) circle[radius=2pt];

\draw[blue, domain=.5:2] plot (\x, {5+ 0.05*\x^2*sin(\x*pi r)});
\fill (2,5) circle[radius=2pt];

\draw[red, domain=.5:2] plot (\x, {-.5*\x + 5 + 0.05*\x^2*sin(\x*pi r)});
\fill (2,6) circle[radius=2pt];

\draw[blue, domain=.5:2] plot (\x, {-\x + 5 - 0.05*\x^2*sin(\x*pi r)});
\fill (2,7) circle[radius=2pt];

\fill (.5,5) circle[radius=2pt];
\fill (.5,4.75) circle[radius=2pt];
\fill (.5,4.5) circle[radius=2pt];
\fill (.5,5.25) circle[radius=2pt];
\fill (.5,5.5) circle[radius=2pt];

\fill (4,0) circle[radius=2pt];
\node[below] at (4,0) {$b$};

\draw[blue] (3.75,0.5) to [out=112, in=285] (3,2);
\fill (3,2) circle[radius=2pt];
\fill (3.75,0.5) circle[radius=2pt];

\draw[red] (4, 0.5) to [out=85, in=255] (4, 2);
\fill (4,2) circle[radius=2pt];
\fill(4, 0.5) circle[radius=2pt];

\draw[blue] (4.25,0.5) to [out=68, in=255] (5,2);
\fill (5,2) circle[radius=2pt];
\fill(4.25,0.5) circle[radius=2pt];

\fill (5,10) circle[radius=2pt];
\node[above] at (5,10) {$a$};

\draw[blue] (4.75,9.5) to [out=242, in=50] (4,8);
\fill (4,8) circle[radius=2pt];
\fill (4.75,9.5) circle[radius=2pt];

\draw[red] (5,9.5) to [out=265, in=75] (5,8);
\fill (5,8) circle[radius=2pt];
\fill (5,9.5) circle[radius=2pt];

\draw[blue] (5.25,9.5) to [out=298, in=120] (6,8);
\fill (6,8) circle[radius=2pt];
\fill (5.25, 9.5) circle[radius=2pt];

\draw (2, 2) rectangle (8, 8);

\fill (4,8) circle[radius=2pt];
\node[above] at (4,8) {$z_3^*$};
\draw[blue] (4,8) to [out=290, in=110] (4, 6.75);
\fill (5,8) circle[radius=2pt];
\node[above] at (5,8) {$z_2$};
\draw[red] (5,8) to [out=250, in=70] (5, 5.75);
\fill (5,8) circle[radius=2pt];
\node[above] at (6,8) {$z_1^*$};
\draw[blue] (6,8) to [out=290, in=100] (6, 4.75);

\fill (2,3) circle[radius=2pt];
\node[left] at (2,3) {$z_8^*$};
\draw[blue] (2,3) to [out=340, in=160] (3.25, 3);
\fill (2,4) circle[radius=2pt];
\node[left] at (2,4) {$z_7$};
\draw[red] (2,4) to [out=15, in=175] (4.25, 4);
\fill (2,5) circle[radius=2pt];
\node[left] at (2,5) {$z_8^*$};
\draw[blue] (2,5) to [out=350, in=170] (6.25, 5);
\fill (2,6) circle[radius=2pt];
\node[left] at (2,6) {$z_5$};
\draw[red] (2,6) to [out=15, in=195] (5.25, 6);
\fill (2,7) circle[radius=2pt];
\node[left] at (2,7) {$z_4^*$};
\draw[blue] (2,7) to [out=340, in=190] (4.25, 7);

\fill (3,2) circle[radius=2pt];
\node[below] at (3,2) {$z_9^*$};
\draw[blue] (3,2) to [out=135, in=280] (3, 3.25);
\fill (4,2) circle[radius=2pt];
\node[below] at (4,2) {$z_{10}$};
\draw[red] (4,2) to [out=70, in=280] (4, 4.25);
\fill (5,2) circle[radius=2pt];
\node[below] at (5,2) {$z_{11}^*$};
\draw[blue] (5,2) to [out=80, in=270] (5, 5.25);

\draw[blue] (.5,5.5) -- (-.125,5.5) -- (-.125,4.5) -- (.5,4.5);
\draw[red] (.5,5.25) -- (0,5.25) -- (0,4.75) -- (.5,4.75);
\draw [blue] (.5,5) -- (.125,5);
\draw[blue] (3.75,.5) -- (3.75,-.125) -- (4.25,-.125) -- (4.25,.5);
\draw[red] (4,.5) -- (4,0);
\draw[blue] (4.75, 9.5) --(4.75,10.125) -- (5.25,10.125) -- (5.25,9.5);
\draw[red] (5, 9.5) -- (5,10);

\end{tikzpicture}
\caption{The event $\mathcal{H}$, together with the appropriate $\mathcal{G}_i$'s and $\mathcal{G}_i^*$'s and a local surgery in the neighbourhoods of $a$, $b$, $c$ and $d$ implies the occurrence of the event $\mathcal{P}_{a,b}$.}
\end{figure}

We deduce that
$$\mathbb P_{p_c}(\mathcal H\cap \mathcal G_1^*\cap\mathcal G_2\cap\mathcal G_3^*\cap\mathcal G_4^*\cap\mathcal G_5\cap \mathcal G_6^*\cap\mathcal G_7\cap\mathcal G_8^*\cap \mathcal G_9^*\cap\mathcal G_{10}\cap\mathcal G_{11}^*)\ge k^{-12 c_8}.$$
One concludes the proof by noticing that a local surgery near $a$, $b$, $c$ and $d$ implies the existence of $c_9>0$ such that
$$\mathbb P_{p_c}(\mathcal{P}_{a,b})\ge c_9\mathbb P_{p_c}(\mathcal H\cap \mathcal G_1^*\cap\mathcal G_2\cap\mathcal G_3^*\cap\mathcal G_4^*\cap\mathcal G_5\cap \mathcal G_6^*\cap\mathcal G_7\cap\mathcal G_8^*\cap \mathcal G_9^*\cap\mathcal G_{10}\cap\mathcal G_{11}^*)\ge c_9k^{-12 c_8}.$$
The proof follows by choosing $c_6>0$ large enough.
\end{proof}

\begin{remark*}Under the conjecture that $2$-dimensional percolation is
conformally invariant, the best constant in Lemma \ref{l:surgery}
may be calculated. The worst case is when $a$ and $b$ are both in
the same corner, and then one gets a half-plane 5-arm exponent at $f$
and a quarter-plane 5-arm exponent at the corner (which is twice the
half-plane exponent, by conformal invariance). Using the determination
of these exponents in \cite{Smirnov_Werner} gives $c_6=15$.
\end{remark*}

\section{Input from near-critical percolation}\label{sec:3}

In this section, we recall general facts on planar Bernoulli
percolation which imply Proposition~\ref{l:limpp_c} (recall that it
claimed that $\mathbb P_{p_c+(\log n)^{-c}}(\mathcal A_n)\to 1$ as $n \to \infty$). Let $1\gg \ep>0$. For $p>p_c$, introduce
$$L_\ep(p):=\min\Big\{n\geq 1;\; \mathbb{P}_p\Big(\{0\}\times\llbracket 0,n\rrbracket\lr[\llbracket 0,2n\rrbracket\times\llbracket 1,n-1\rrbracket]\{2n\}\times\llbracket 0,n\rrbracket\Big) \geq 1-\ep\Big\}.$$
This quantity, sometimes called {\em characteristic} or {\em correlation} length, was proved \cite{Kesten87, Nolin} to satisfy the following facts:
\begin{itemize}
\item[{\bf P1}] (Probability for hard-way crossings). For any $p>p_c$ and $n\ge L_\ep(p)$,
\begin{equation}\label{eq:P1}
\mathbb{P}_p\Big(\{0\}\times\llbracket 0,n\rrbracket
  \lr[\llbracket 0,2n\rrbracket\times\llbracket 1,n-1\rrbracket]
  \{2n\}\times\llbracket 0,n\rrbracket\Big) 
\geq 1-\ep.
\end{equation}
\item[{\bf P2}] (Probability for 4 arms). There exist $c_{10},c_{11}>0$ such that for any $p\in (p_c,1-\ep)$,
\begin{equation}\label{eq:P2}
c_{10}\le (p-p_c) L_\ep(p)^2\mathbb P_{p_c}\big(\mathcal
E_4(L_\ep(p))\big)\le c_{11},
\end{equation}
where (below, $x$ is a fixed neighbour of the origin)
$$\mathcal E_4(n):=\{0\lr[]\partial B_n\}\cap\{x\lr[]\partial B_n\}\cap\{0\lr[B_n]x\}^c.$$
\end{itemize}

Let us recall the following fact, of which we provide a sketch of proof for completeness.
\begin{lemma}\label{lem:bound on pi_4}
There exists $c_{12}<2$ such that for any $n$ large enough,
$\mathbb P_{p_c}(\mathcal E_4(n))\ge n^{-c_{12}}$.
\end{lemma}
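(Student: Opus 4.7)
The plan is to prove the lower bound by combining three well-known facts about two-dimensional critical percolation: the universal full-plane five-arm exponent, the positivity of the one-arm exponent at criticality, and Reimer's inequality. The property \textbf{P2} alone does not suffice: it yields $\mathbb{P}_{p_c}(\mathcal{E}_4(L_\varepsilon(p))) \asymp 1/((p-p_c)L_\varepsilon(p)^2)$, but translating this into an $n^{-c_{12}}$ bound with $c_{12}<2$ requires controlling how fast $L_\varepsilon(p)$ grows in $1/(p-p_c)$, and this control is really the same question in disguise.

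Introduce the alternating five-arm event $\mathcal{E}_5(n)$, namely the existence of five disjoint arms of alternating primal/dual colour from a fixed neighbourhood of the origin to $\partial B_n$. The universal five-arm lower bound
\[
\mathbb{P}_{p_c}(\mathcal{E}_5(n)) \geq c n^{-2}
\]
is a classical result, derivable from an explicit Russo--Seymour--Welsh construction at criticality (see e.g.\ the exposition in Nolin's lecture notes already referenced). Separately, an iterated RSW argument over a dyadic family of nested annuli around the origin gives the polynomial decay of the one-arm probability,
\[
\mathbb{P}_{p_c}(0 \longleftrightarrow \partial B_n) \leq C n^{-\alpha},
\]
for some constant $\alpha > 0$ (each annulus contains a dual circuit around $0$ with probability bounded below by a positive constant, and there are $\Theta(\log n)$ disjoint annuli).

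The final ingredient is Reimer's inequality: the five-arm event factors as the disjoint occurrence of four of its arms (of alternating colour, forming a valid $\mathcal{E}_4(n)$ configuration) and of the fifth arm (a single arm of the appropriate colour from the origin to $\partial B_n$). Reimer's inequality then yields
\[
\mathbb{P}_{p_c}(\mathcal{E}_5(n)) \leq \mathbb{P}_{p_c}(\mathcal{E}_4(n)) \cdot \mathbb{P}_{p_c}(0 \longleftrightarrow \partial B_n).
\]
Rearranging and combining with the previous two estimates gives $\mathbb{P}_{p_c}(\mathcal{E}_4(n)) \geq c' n^{\alpha - 2}$, proving the lemma with $c_{12} = 2 - \alpha < 2$. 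The main obstacle is the universal five-arm lower bound, which is a substantial classical result of 2D critical percolation theory that I would cite rather than reprove; the remaining steps are routine.
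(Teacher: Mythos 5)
Your proposal is essentially the paper's proof: both combine a universal $n^{-2}$ lower bound on a five-arm event, Reimer's inequality to factor out a one-arm event, and the RSW-driven polynomial decay of the one-arm probability at criticality. The only real difference is bookkeeping: the paper does not use the standard alternating five-arm event, but rather defines $\mathcal{E}_5(n) := \mathcal{E}_4(n) \cap \{\text{two open arms from }0\text{ to }\partial B_n\text{ meeting only at }0\}$ and cites Kesten--Sidoravicius--Zhang (Lemma~5) for the $c/n^2$ lower bound on this particular event. With that definition the disjoint occurrence $\mathcal{E}_4(n) \circ \{0\leftrightarrow\partial B_n\}$ is automatic, whereas your version requires an extra (routine but nontrivial) step identifying the four-arm sub-configuration extracted from the alternating five-arm event with the event $\mathcal{E}_4(n)$, which is pinned to the specific edge $\{0,x\}$ rather than to a generic neighbourhood of the origin. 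That identification is standard, so the argument goes through, but the paper's formulation sidesteps it entirely.
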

\begin{proof}[Sketch of proof]
Let 
$$\mathcal E_5(n):=\mathcal E_4(n)\cap\{\text{there exist two open paths from $0$ to $\partial B_n$ intersecting at $0$ only}\}.$$ It was proved in \cite[Lemma 5]{Kesten_Sidoravicius_Zhang} that there exists $c_{13}>0$ such that
\begin{equation}
\label{e:PpcE5}
\mathbb P_{p_c}[\mathcal E_5(n)]\ge \frac{c_{13}}{n^2}.
\end{equation}
Since the occurrence of $\mathcal E_5(n)$ implies the disjoint occurrence (see \cite[Section 2.3]{Grimmett} for a definition of disjoint occurrence) of $\mathcal E_4(n)$ and $\{0\lr\partial B_n\}$, Reimer's inequality \cite{Reimer} implies that
\begin{equation}\label{eq:142}\mathbb P_{p_c}(\mathcal E_5(n))\le \mathbb P_{p_c}(\mathcal E_4(n))\cdot \mathbb P_{p_c}(0\lr \partial B_n).\end{equation}
Now, a simple application of the Russo-Seymour-Welsh theory \cite{Russo78, Seymour_Welsh} implies that 
$$\mathbb P_{p_c}(0\lr \partial B_n) \leq n^{-c_{14}}$$ for all $n\ge1$. Plugging this estimate in \eqref{eq:142} and using \eqref{e:PpcE5} implies the claim readily.\end{proof}

\begin{proof}[Proof of Proposition~\ref{l:limpp_c}] Fix $c_3>0$ and
  $\ep>0$. Lemma~\ref{lem:bound on pi_4} and \eqref{eq:P2} show that 
$$L_\ep(p)\le \left(\frac{c_{11}}{p-p_c}\right)^{1/(2-c_{12})}.$$
Thus, there exists $n_0=n_0(\ep)>0$ such that for all $n>n_0$,
$$L_\ep\big(p_c+(\log n)^{-c_3}\big)\le \left(c_{15}{\log n}\right)^{c_3/(2-c_{12})}\le n.$$
As a consequence, \eqref{eq:P1} implies that for $n\ge n_0$,
\begin{equation}\label{eq:112}\mathbb{P}_{p_c + \log{n}^{-c_{3}}}\Big(\{0\}\times\llbracket 0,n\rrbracket\lr[\llbracket 0,2n\rrbracket\times\llbracket 1,n-1\rrbracket]\{2n\}\times\llbracket 0,n\rrbracket\Big) \geq 1-\ep.\end{equation}
Now, assume that the following events occur simultaneously for $i,j\in\{-2,-1,0,1\}$,
\begin{itemize}
\item $\{in\}\times\llbracket jn,(j+1)n\rrbracket\lr[\llbracket in,(i+2)n\rrbracket\times\llbracket jn+1,(j+1)n-1\rrbracket]\{(i+2)n\}\times\llbracket jn,(j+1)n\rrbracket$,
\item $\llbracket in,(i+1)n\rrbracket\times\{jn\}\lr[\llbracket in+1,(i+1)n-1\rrbracket\times\llbracket jn,(j+2)n\rrbracket] \llbracket in,(i+1)n\rrbracket\times\{(j+2)n\}$.
\end{itemize}
In such case $\mathcal A_n$ occurs. 
(Note that we have been wasteful in the number of events involved above, some of them are not necessary in order to guarantee the occurrence of $\mathcal{A}_n$.) 
Therefore, the FKG inequality combined with \eqref{eq:112} implies that for $n\ge n_0$,
$$\mathbb P_{p_c + \log{n}^{-c_3}} (\mathcal A_n)\ge (1-\ep)^{32}$$
which implies the claim readily.
\end{proof}

\section{The renormalisation scheme}\label{sec:4}

\begin{figure} 
\begin{tikzpicture}[scale=.75]
\foreach \x in {0,...,14} 
  \foreach \y in {0,...,6}
  {
    \pgfmathparse{int(mod(\x+\y,2))}
    \let\r\pgfmathresult
    \ifnum\r=0
      \draw[fill=white] (\x,\y) rectangle (\x+1,\y+1);
      \fill (\x+.5,\y+.5) circle [radius=2pt];
      \draw[dotted] (\x,\y) -- (\x+1,\y+1);
      \draw[dotted] (\x+1, \y) -- (\x, \y+1);
    \else
      \draw[fill=gray!20] (\x,\y) rectangle (\x+1,\y+1);
    \fi
  }

\draw (0,-1) -- (15,-1);
\foreach \x in {-7,...,7}
 { 
   \draw[decoration ={brace, mirror},
   decorate] (7.1+\x,-.1) -- (7.9+\x, -.1);
   \node[below] (column \x) at (7+\x+.5,-0.1) {\tiny $c{(\x)}$};
   \draw (7+\x,-.9)--(7+\x,-1.1)
   (7.5+\x,-.9)--(7.5+\x,-1.1);
  
  }
  \draw (15,-.9)--(15,-1.1);
  \node[below] at (7.5,-1.1) {\tiny $0$};
\foreach \z in {-15,-13,...,-3,3,5,...,15}
 { \node[below] at (7.5+\z*.5, -1.1) {\tiny $\z n$}; }
 \node[below] at (8,-1.17) {\tiny $n$};
 \node[below] at (6.93,-1.1) {\tiny $-n$};
  
\draw (-1,0) -- (-1,7);  
\node[left] at (-1.1,3.5) {\tiny $0$};
\foreach \y in {0,...,6}
 {
 \draw (-1.1,\y) -- (-.9, \y)
 (-1.1, \y+.5) -- (-.9,\y+.5); 
 }
\draw (-1.1, 7) -- (-.9,7);
\foreach \z in {-7,-5,-3,3,5,7}
 {
 \node[left] at (-1.1,3.5+ \z*.5) {\tiny $\z n$};
 }
\node[left] at (-1.1,4) {\tiny $n$};
\node[left] at (-1.1,3) {\tiny $-n$};

\coordinate (origin) at (7.5, 3.5);
\fill (origin) circle [radius=1pt] node[below] {\tiny $0$};

\draw[red] 
          ($(origin)+(-.75,-.75)$) to [out=90+rand*15, in=270+rand*15] ($(origin)+(-.75,.75)$)
          ($(origin)+(.75, -.75)$) to [out=90+rand*15, in=270+rand*15] ($(origin)+(.75, .75)$)
          ($(origin)+(-.75,-.75)$) to [out=0+rand*15, in=180+rand*15] ($(origin)+(.75,-.75)$)
          ($(origin)+(-.75,.75)$) to [out=0+rand*15, in=180+rand*15] ($(origin)+(.75,.75)$);
          
\draw ($(origin)+(-1,-1)$) rectangle  ($(origin)+(1,1)$);

\foreach \x in {0,...,8}
 {
  \draw[dashed] (10.5+\x/4,0) -- (10.5+\x/4,7.5);
 } 

\foreach \x / \y in {0 / 2, 1 / 3, 2 / 2, 3 / 1, 4 / 2, 5 / 3 } 
 {
  \fill (\x+.5,\y+.5) circle [radius=1pt] node[above] {\tiny $i_{\x}$};
  \draw[red] 
          ($(\x+.5,\y+.5)+(-.75,-.75)$) to [out=90+rand*15, in=270+rand*15] ($(\x+.5,\y+.5)+(-.75,.75)$)
          ($(\x+.5,\y+.5)+(.75, -.75)$) to [out=90+rand*15, in=270+rand*15] ($(\x+.5,\y+.5)+(.75, .75)$)
          ($(\x+.5,\y+.5)+(-.75,-.75)$) to [out=0+rand*15, in=180+rand*15] ($(\x+.5,\y+.5)+(.75,-.75)$)
          ($(\x+.5,\y+.5)+(-.75,.75)$) to [out=0+rand*15, in=180+rand*15] ($(\x+.5,\y+.5)+(.75,.75)$);
 }
\end{tikzpicture}
\caption{In the center, we illustrate the event $\mathcal{A}_n (0)$ whose occurrence is assured by red circuit in the annulus $B_{2n-1} \setminus B_n$.
In the left, we show an oriented path of sites $i_0, \ldots, i_5$ around which the corresponding events occurs.
The dashed vertical lines around $c_4(n)$ cover the region where one needs to inspect $\Lambda \times \mathbb{Z}$ in order to verify that it is a good column.}
\label{fig:L}
\end{figure}
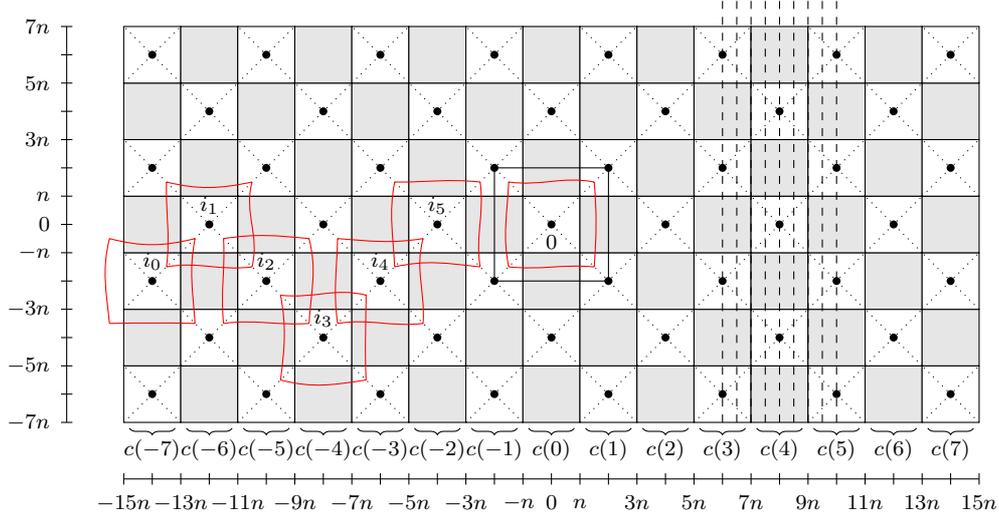

Recall that $\Lambda$ is a random subset of $\mathbb{Z}$ having law $\nu_{\rho}$ under which the events $\{ i\in\Lambda\}$ are mutually independent and have probability $\rho$.
Also recall that the
$i^\textrm{th}$ column of $\nesearrows$ (denoted by $c(i)$) is called good if $\Lambda$ intersects every subinterval of $\llbracket 2n(i-1),2n(i+1)\rrbracket$ that has diameter $\lceil \frac{2}{\rho}\log(2n)\rceil$. 
We start by proving that columns are good with high probability.

\begin{lemma}
\label{l:good}
Let $\rho >0$.
For every $i \in \mathbb{Z}$, $\displaystyle{\lim_{n \to \infty} \nu_{\rho} (c(i) \text{ is good}) =1}$.
\end{lemma}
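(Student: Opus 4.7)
The plan is a direct union bound argument. Under $\nu_\rho$, the events $\{j \in \Lambda\}$ are i.i.d.\ Bernoulli with parameter $\rho$, so the quantity to control is the maximal gap of $\Lambda$ inside $\llbracket 2n(i-1), 2n(i+1) \rrbracket$.

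First I would fix a single subinterval $I \subset \llbracket 2n(i-1), 2n(i+1) \rrbracket$ of diameter $L := \lceil \tfrac{2}{\rho} \log(2n) \rceil$. By independence,
\[
\nu_\rho(I \cap \Lambda = \emptyset) = (1-\rho)^{L+1} \le e^{-\rho L} \le e^{-2\log(2n)} = (2n)^{-2}.
\]
Next I would take a union bound over all such subintervals. The number of subintervals of $\llbracket 2n(i-1), 2n(i+1) \rrbracket$ with diameter $L$ is at most $4n$. Hence
\[
\nu_\rho\bigl(c(i) \text{ is bad}\bigr) \le 4n \cdot (2n)^{-2} = \frac{1}{n},
\]
which tends to $0$ as $n \to \infty$, giving the lemma.

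There is no real obstacle here; the only point to be careful about is that the threshold $L = \lceil \tfrac{2}{\rho}\log(2n)\rceil$ was chosen precisely so that $(1-\rho)^L$ beats the polynomial factor $4n$ coming from the number of possible subintervals. The constant $2/\rho$ in the definition of a good column provides exactly the factor of $2$ in the exponent that we need, with room to spare. This proof works for every $i \in \mathbb{Z}$ with identical bounds because the law $\nu_\rho$ is translation invariant.
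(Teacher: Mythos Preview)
Your proof is correct and essentially identical to the paper's: both compute $(1-\rho)^{L}\le e^{-\rho L}\le (2n)^{-2}$ for the probability that a fixed length-$L$ interval misses $\Lambda$, and then union-bound over the at most $4n$ such subintervals of $\llbracket 2n(i-1),2n(i+1)\rrbracket$ to obtain the $1/n$ bound. The only cosmetic difference is that the paper phrases the same calculation via the gap variable $\ell(x)=\inf\{w-x:w\in\Lambda,\,w>x\}$ rather than directly via subintervals.
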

\begin{proof}
For any $x \in \mathbb{Z}$ and $\Lambda\subset\mathbb Z$ let 
$$\ell(x)=\ell(x,\Lambda) := \inf\{w-x\, :\, w \in \Lambda,\, w> x\}.$$
A calculation gives
$\nu_{\rho}(\ell(x) > k) = (1-\rho)^k \leq e^{-\rho k}$.
It follows that
\[
\nu_{\rho} \big(\ell(x) > k \text{ for some } x \text{ between }2i(n-1)\text{ and } 2i(n+1)\big)\leq 4n e^{-\rho k}.
\]
For $k = \lceil \frac{2}{\rho}\log(2n) \rceil$, the right-hand side is at most equal to $1/n$ while the left-hand side contains the event that $c(i)$ is bad.
This proves the result.
\end{proof}

Let us dedicate a paragraph to the nature of the last remaining
obstacle. We already established that every column is good with high
probability; that in good columns $\mathbb{E}(X(v))$ can be
made as close to 1 as we wish; and that in bad columns 
$\mathbb{E}(X(v))>c$ (recall that $X(v)=\mathbf{1}[\mathcal A_n(2nv)]$). 
The problem is that the $X(v)$ at different $v$ are not independent, they are only
1-dependent. Now, the boxes in good columns do not pose any problem:
By \cite{Liggett_Schonmann_Stacey} 1-dependent
events with sufficiently high probability stochastically dominate
independent events with lower probability. It is the boxes in the bad
column that we must worry about. Liggett, Schonmann and Stacey give a
simple and highly instructive example of 1-dependent events with
probability $\nicefrac 12$ which do not dominate $p$-independent
events, no matter how small $p$ is taken (see the bottom of page 73 in
\cite{Liggett_Schonmann_Stacey}). Hence, if we want to show that the
collection of $X(v)$ dominates independent percolation, we
need to use some specific property of them. We will use the FKG inequality.

In the rest of this article, we will drop $n$ from the
notation. 
While it is probably
true that $\mathbb E(X(v)\,|\,X(w)\;\forall w\ne v)>c$, we found it easier
to use external randomization. Let us therefore introduce a
parameter $\eta\le\frac 12$ (to be fixed later) and a 
family of i.i.d.\ Bernoulli($1-\eta$) random variables
$(Y(z):z\in \nesearrows)$ which is independent of everything else. Define $W(z):=X(z)Y(z)$.
\begin{lemma}
\label{l:domW}
For any $\Lambda \subset \mathbb{Z}$ and $p$, $q \in [0,1]$, the following inequality holds almost surely
\[
\mathbb{P}_{p,q}^{\Lambda} (W(0) =1 \,|\, W(z), z\in \nesearrows\setminus\{0\}) \geq 
\eta^{4}\mathbb{P}_{p,q}^{\Lambda}(W(0) =1).
\]
\end{lemma}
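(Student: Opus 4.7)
The plan is to use the auxiliary randomness $Y$ precisely to overcome the obstacle highlighted just before the lemma, namely that $1$-dependence alone is insufficient for stochastic domination by a product percolation. First, since $Y(0)$ is Bernoulli$(1-\eta)$ and independent of both $\mathcal{F}_0:=\sigma(W(z):z\ne 0)$ and $X(0)$, one has $\mathbb{P}_{p,q}^\Lambda(W(0)=1\mid\mathcal{F}_0)=(1-\eta)\mathbb{P}_{p,q}^\Lambda(X(0)=1\mid\mathcal{F}_0)$ and $\mathbb{P}_{p,q}^\Lambda(W(0)=1)=(1-\eta)\mathbb{P}_{p,q}^\Lambda(X(0)=1)$, so the lemma reduces to
\[\mathbb{P}_{p,q}^\Lambda(X(0)=1\mid\mathcal{F}_0)\ge \eta^4\,\mathbb{P}_{p,q}^\Lambda(X(0)=1).\]
The key geometric fact is that $X(0)$ is measurable with respect to the edges in $B_{2n-1}(0)$, which is disjoint from $B_{2n-1}(2nw)$ whenever $\|w\|_\infty\ge 2$; since $w\in\nesearrows$ has even coordinate sum, this singles out the four vertices $\mathcal{N}:=\{(1,1),(1,-1),(-1,1),(-1,-1)\}$, and $X(0)$ is independent of $\mathcal{F}_{\rm ext}:=\sigma(W(z):z\in S_{\rm ext})$ with $S_{\rm ext}:=\nesearrows\setminus(\mathcal{N}\cup\{0\})$.

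Fix a realisation $\omega_0$ of $\mathcal{F}_0$, split $\mathcal{N}=\mathcal{N}^+\sqcup\mathcal{N}^-$ according to whether $\omega_0(w)$ is $1$ or $0$, and introduce the event
\[C^*:=\{Y(w)=1\text{ for }w\in\mathcal{N}^+\}\cap\{Y(w)=0\text{ for }w\in\mathcal{N}^-\},\]
which has probability $(1-\eta)^{|\mathcal{N}^+|}\eta^{|\mathcal{N}^-|}$, depends only on $(Y(w))_{w\in\mathcal{N}}$, and is therefore independent of everything else. On $C^*$, the condition $W(w)=0$ is automatic for $w\in\mathcal{N}^-$, while for $w\in\mathcal{N}^+$ the condition $W(w)=1$ is equivalent to $X(w)=1$; writing $D:=\{W(z)=\omega_0(z):z\in S_{\rm ext}\}$, this yields
\[\mathbb{P}_{p,q}^\Lambda(X(0)=1,\mathcal{F}_0=\omega_0)\ge(1-\eta)^{|\mathcal{N}^+|}\eta^{|\mathcal{N}^-|}\,\mathbb{P}_{p,q}^\Lambda\bigl(X(0)=1,\,X(w)=1\ \forall w\in\mathcal{N}^+,\,D\bigr).\]
For the matching upper bound on $\mathbb{P}_{p,q}^\Lambda(\mathcal{F}_0=\omega_0)$, I would sum over the $2^{|\mathcal{N}^-|}$ possible values of $(Y(w))_{w\in\mathcal{N}^-}$ (noting that $Y(w)=1$ is forced on $\mathcal{N}^+$): dropping each constraint $X(w)=0$ that appears when $Y(w)=1$ on $\mathcal{N}^-$ only increases probabilities and, combined with $(\eta+(1-\eta))^{|\mathcal{N}^-|}=1$, yields $\mathbb{P}_{p,q}^\Lambda(\mathcal{F}_0=\omega_0)\le(1-\eta)^{|\mathcal{N}^+|}\mathbb{P}_{p,q}^\Lambda(X(w)=1\ \forall w\in\mathcal{N}^+,D)$. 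Taking the ratio and using $|\mathcal{N}^-|\le 4$ reduces the task to establishing
\[\mathbb{P}_{p,q}^\Lambda\bigl(X(0)=1\,\big|\,X(w)=1\ \forall w\in\mathcal{N}^+,\,D\bigr)\ge\mathbb{P}_{p,q}^\Lambda(X(0)=1).\]

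This last inequality, the main technical step, follows from FKG. The edges governing $X(0)$ (those of $B_{2n-1}(0)$) are disjoint from those governing $D$ (those of $\bigcup_{z\in S_{\rm ext}}B_{2n-1}(2nz)$), so revealing the $D$-edges together with all $Y(z)$, $z\in S_{\rm ext}$, leaves the remaining edges distributed as a product Bernoulli measure; under this measure both $\{X(0)=1\}$ and $\{X(w)=1\ \forall w\in\mathcal{N}^+\}$ are increasing events in the remaining edges, so pointwise FKG in the revealed configuration, followed by integration against its conditional law, produces the inequality. The main conceptual point is the design of $C^*$: the auxiliary randomness $Y$ is what lets us replace the decreasing conditioning $\{W(w)=0\}$ on $\mathcal{N}^-$ -- incompatible with a direct FKG approach, as the Liggett-Schonmann-Stacey counterexample~\cite{Liggett_Schonmann_Stacey} recalled by the authors makes clear -- by the neutral event $\{Y(w)=0\}$, at a cost of only a factor $\eta$ per neighbour.
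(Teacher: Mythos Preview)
Your proof is correct and follows essentially the same route as the paper's: force the $Y$-variables at the four neighbours to match the observed $W$-values so that each decreasing constraint $\{W(w)=0\}$ is replaced by the neutral $\{Y(w)=0\}$ at cost $\eta$, and then apply FKG to the remaining increasing conditioning. The one place where the paper is more careful is in avoiding the null event ``$\{\mathcal F_0=\omega_0\}$'': it conditions instead on the edge configuration in a finite annulus $B_m\setminus B_{2n}$ together with $W(z_i)=\zeta_i$ and then lets $m\to\infty$, and your argument becomes rigorous the same way once you restrict $D$ to a finite window and pass to the limit by martingale convergence.
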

\begin{proof}
For simplicity we will remove $\Lambda$, $p$ and $q$ from the notation.
Let $z_1,\dotsc,z_4$ be the 4 neighbours of 0 in $\nesearrows$ (since $\nesearrows$ is directed, we should specify that we mean either $0\sim z_i$ or $z_i\sim 0$). Let
$\zeta\in\{0,1\}^4$, let $m>2n$ and
let $\xi\in \{0,1\}^{B_m\setminus B_{2n}}$. We define the event 
\[
\mathcal{B} =\mathcal{B}_{\zeta,\xi}=
\{W(z_i)=\zeta_i\;\forall i\in\llbracket 1,4\rrbracket\}\cap
\{\omega(e)=\xi(e)\;\forall e\in E(B_m\setminus B_{2n})\}
\]
where $\omega$ is as before the percolation configuration. It is
enough to show
\begin{equation}\label{eq:Eenough}
\mathbb P(W(0)=1\,|\,\mathcal{B})
\ge\eta^{4}\mathbb P(W(0)=1)\qquad \text{a.s.}\qquad\forall\,\zeta,\xi.
\end{equation}
Indeed, once \eqref{eq:Eenough} is shown, it
is possible to add an arbitrary conditioning on
$\{Y(v)\,|\,v\not\in\{0,z_1,\dotsc,z_4\}\}$ as these are independent of
both $W(0)$ and of $\mathcal{B}$. Then taking $m\to\infty$ would give the
statement of the lemma but on a finer $\sigma$-field. Integrating
would give the exact claim of the lemma. 

We thus focus on the proof of \eqref{eq:Eenough}. Fix some $\zeta$, $m$ and
$\xi$ for the rest of the proof. Define
\[
\mu(\cdot)=\mathbb P(\,\cdot\,|\;\omega(e)=\xi(e)\;\forall e\in
E(B_m\setminus B_{2n})).
\]
Define $I(\zeta)=\{i:\zeta_i=1\}$ and then write
\[
\begin{split}
& \mu(X(0)=1, W(z_i)=\zeta_i\;\forall i\in\llbracket 1,4\rrbracket)\\
& \geq  \mu(X(0)=1, W(z_i)=\zeta_i, Y(z_i) = \zeta_i \;\forall
  i\in\llbracket 1,4\rrbracket) \\ 
& = \mu(X(0)=1, X(z_i)=1 \;\forall i \in I(\zeta),
  Y(z_i)=\zeta_i \;\forall i\in\llbracket 1,4\rrbracket)\\
& = \mu(X(0) = 1, X(z_i) =1 \;\forall i \in I(\zeta)) \cdot\mathbb P(Y(z_i)=\zeta_i \;\forall i\in\llbracket 1,4\rrbracket) \\
&\ge \eta^{4} \mu(X(0)=1, X(z_i)=1 \;\forall i\in I(\zeta))\\
& \geq \eta^{4} \mu(X(0)=1, W(z_i)=1 \;\forall i \in I(\zeta)).
\end{split}
\]
Noting that
$$\mu(W(z_i) =1 \;\forall i\in I(\zeta))
\ge \mu (W(z_i) = \zeta_i \;\forall i\in\llbracket 1,4\rrbracket) ,$$
one can conclude that
\begin{align*}
\lefteqn{\mu(X(0)=1|W(z_i)=\zeta_i \;\forall i\in\llbracket 1,4\rrbracket)}\qquad &\\
&\geq \eta^{4} \mu(X(0)=1| W(z_i)=1 \;\forall i \in I(\zeta))\\
\textrm{by FKG}\qquad & \geq \eta^{4} \mu(X(0)=1)
=\eta^{4}\mathbb P(X(0)=1).
\end{align*}
(It is easy to check that the FKG inequality holds for $\mu$).
Since $Y(0)$ is independent of $X(0)$ and $W(z_i)$ for $i=1,\ldots,4$
(and also after the conditioning on $E$), we have that
\[ \mu(W(0)=1|W(z_i)=\zeta_i \;\forall i\in\llbracket 1,4\rrbracket)
\geq \eta^{4}\mathbb P(W(0)=1).\]
which is equivalent to \eqref{eq:Eenough}, proving the lemma.
\end{proof}

We are now ready for: 

\begin{proof}[Proof of Theorem \ref{t:main}]As parameter dependency is
a little complicated here, let us start by setting all
parameters formally. First choose $\eta$ (from the definition of $Y$ and $W$)
to be $\nicefrac 13(1-p_c(\nesearrows))$. Next choose 
$$p_B= \eta^4(1-\eta)\inf\{\mathbb{P}_{p_c}(\mathcal A_n):n\ge1\}$$
which is strictly positive by the Russo-Seymour-Welsh theorem (see
\cite{Russo78,Seymour_Welsh} again). Define $p_G=1-2\eta$ (so still $p_G>p_c(\nesearrows)$).
Theorem~\ref{thm:main input} proves the existence of $\rho' < 1$ such
that oriented percolation in $\nesearrows$ with density of good lines
$\rho'$ and probability $p_G$ and $p_B$ in good and bad lines
respectively percolates a.s.

Next use the theorem of Liggett, Schonmann and Stacey 
\cite[Theorem 0.0]{Liggett_Schonmann_Stacey} to find some $\sigma$ such
that any 1-dependent family of variables $\{X_i:i\in V(\nesearrows)\}$
with $\mathbb P(X_i=1)>\sigma$ stochastically dominates
$(1-\eta)$-Bernoulli independent variables. Use the theorem again to find
some $\bar{\rho}$ such that any 1-dependent family of variables
$\{G_i:i\in \mathbb Z\}$ with $\mathbb P(G_i)>\bar{\rho}$ stochastically
dominates $\rho'$-Bernoulli independent variables.

Finally, we claim that for $n$ sufficiently large (depending on the $\varepsilon$ and $\rho$ from the statement of the theorem), the probability
that a column is good is more than $\bar{\rho}$, while the probability
that $\{X(v)= 1\} = \mathcal  A_n(2nv)$ occurs in a good column is more than
$\sigma$. Indeed, the first follows from Lemma \ref{l:good} while the
second follows from \eqref{e:spread3}. Fix $n$ to satisfy this
property. Finally, use continuity to choose some $q<p_c$ such that 
$\mathbb P_{p,q}^\Lambda(X(v)=1)>\sigma$ in any good column.

With all parameters defined, let us start with the columns. The
definitions of $n$ and $\rho$ allow to define $\rho'$-independent
variables $\Xi(i)$, depending only on $\Lambda$, such that if $\Xi(i)=1$ then the column $c(i)$ is
good. It will be convenient to define, for a vertex $v$ in a column
$c(i)$, $\Xi(v)=\Xi(i)$. Fix one realisation of $\Lambda$ and $\Xi$.

By the choice of $n$ and $q$, we know that for every $v$, 
$\Xi(v)=1\Rightarrow\mathbb P_{p,q}^\Lambda(X(v)=1)>\sigma$. Since these events are 1-dependent,
they dominate $(1-\eta)$-independent variables. Therefore the
variables $\{W(v):\Xi(v)=1\}$ dominate
$(1-\eta)^2$-independent variables. When $\Xi=0$ we use
Lemma \ref{l:domW} and get that, for any realisation of $W$ on the
$\{\Xi=1\}$, $\{W(v):\Xi(v)=0\}$ dominates
i.i.d.\ Bernoulli variables with probability 
\[
\eta^4\mathbb P(W(0)=1)=\eta^4(1-\eta)\mathbb P(X(0)=1)\ge p_B.
\]
All in all we get that $W$ dominates a family of independent Bernoulli random variables with mean $p_G$ where $\Xi=1$ and $p_B$ where $\Xi=0$. 
Denote a realisation of these independent variables by
$\Psi=\{\Psi(v):v\in\nesearrows\}$.

We get that $\Xi$ and $\Psi$ have exactly the distribution of variables on
$\nesearrows$ such that $\Xi$ are independent $\rho'$-Bernoulli random variables and $\{\Psi(v) =1 \}$ has
probability $p_G$ if $\Xi(v)=1$ and $p_B$ if $\Xi(v)=0$. Hence
Theorem \ref{thm:main input} applies and we get that the $\Psi$ percolate.
Since $X(v)\ge \Psi(v)$ so do the $\{X(v);\, v\in \nesearrows\}$. But if $X(v)=1$, then
$\mathcal A_n(2nv)$ occurs, and the geometric setup (see Figure
\ref{fig:L}) requires that these loops connect to one infinite
cluster, proving the theorem.
\end{proof}

\section*{Acknowledgements} This work was conducted during visits to the Weizmann Institute and the University of Geneva. We thank both institutions for their hospitality. H.D-C.\ was supported by the FNS and the NCCR SwissMap. M.R.H.\ was supported by the Brazilian CNPq grant 248718/2013-4 and by NCCR SwissMAP, the ERC AG COMPASP, the Swiss FNS. The research of G.K.\ was supported by the Israel Science Foundation and the Jesselson Foundation. The research of V.S. was supported in part by Brazilian CNPq grants 308787/2011-0 and 476756/2012-0 and FAPERJ grant E-26/102.878/2012-BBP. This work was also supported by  ESF RGLIS grant.

\end{document}